\tikzset{commutative diagrams/arrow style=math font}
\newcommand{\tensor}{\otimes}
\newcommand{\NN}{\mathbb{N}}
\newcommand{\QQ}{\mathbb{Q}}
\newcommand{\RR}{\mathbb{R}}
\DeclareMathOperator{\id}{id}
\DeclareMathOperator{\codim}{codim}
\DeclareMathOperator{\Pic}{Pic}
\newcommand{\define}[1]{\emph{#1}}
\newcommand{\gl}{\mathfrak{gl}}
\newcommand{\shf}[1]{\mathscr{#1}}
\newcommand{\OX}{\shf{O}_X}
\def\overbar#1#2#3{{%
	\setbox0=\hbox{\displaystyle{#1}}%
	\dimen0=\wd0
	\advance\dimen0 by -#2 
	\vbox {\nointerlineskip \moveright #3 \vbox{\hrule height 0.3pt width \dimen0}%
		\nointerlineskip \vskip 1.5pt \box0}%
}}
\newcommand{\into}{\hookrightarrow}
\newcommand{\fu}{f^{\ast}}
\newcommand{\fl}{f_{\ast}}
\newcommand{\shF}{\shf{F}}
\newcommand{\shO}{\shf{O}}
\let\@@seccntformat\@seccntformat
\renewcommand*{\@seccntformat}[1]{%
  \expandafter\ifx\csname @seccntformat@#1\endcsname\relax
    \expandafter\@@seccntformat
  \else
    \expandafter
      \csname @seccntformat@#1\expandafter\endcsname
  \fi
    {#1}%
}
\newcommand*{\@seccntformat@subsection}[1]{%
  \textbf{\csname the#1\endcsname.}
}
\let\@paragraph\paragraph
\renewcommand*{\paragraph}[1]{%
	\vspace{0.3\baselineskip}%
	\@paragraph{\textit{#1}}%
}
\newtheorem{theorem}[equation]{Theorem}
\newtheorem*{theorem*}{Theorem}
\newtheorem{lemma}[equation]{Lemma}
\newtheorem*{lemma*}{Lemma}
\newtheorem{corollary}[equation]{Corollary}
\newtheorem{proposition}[equation]{Proposition}
\newtheorem*{proposition*}{Proposition}
\newtheorem{conjecture}[equation]{Conjecture}
\theoremstyle{definition}
\newtheorem{definition}[equation]{Definition}
\newtheorem*{definition*}{Definition}
\theoremstyle{remark}
\newtheorem*{remark}{Remark}
\newtheorem*{question}{Question}
\newtheorem*{example*}{Example}
\newtheorem*{problem*}{Problem}
\theoremstyle{plain}
\newcommand{\theoremref}[1]{\hyperref[#1]{Theorem~\ref*{#1}}}
\newcommand{\lemmaref}[1]{\hyperref[#1]{Lemma~\ref*{#1}}}
\newcommand{\definitionref}[1]{\hyperref[#1]{Definition~\ref*{#1}}}
\newcommand{\propositionref}[1]{\hyperref[#1]{Proposition~\ref*{#1}}}
\newcommand{\conjectureref}[1]{\hyperref[#1]{Conjecture~\ref*{#1}}}
\newcommand{\corollaryref}[1]{\hyperref[#1]{Corollary~\ref*{#1}}}
\newcommand{\exampleref}[1]{\hyperref[#1]{Example~\ref*{#1}}}
\let\old@caption\caption
\renewcommand*{\caption}[1]{%
	\setcounter{figure}{\value{equation}}%
	\stepcounter{equation}%
	\old@caption{#1}\relax%
}
\newcounter{intro}
\newtheorem{intro-conjecture}[intro]{Conjecture}
\newtheorem{intro-corollary}[intro]{Corollary}
\newtheorem{intro-theorem}[intro]{Theorem}
\newcommand{\parref}[1]{\hyperref[#1]{\S\ref*{#1}}}
\newcommand*\if@single[3]{%
  \setbox0\hbox{${\mathaccent"0362{#1}}^H$}%
  \setbox2\hbox{${\mathaccent"0362{\kern0pt#1}}^H$}%
  \ifdim\ht0=\ht2 #3\else #2\fi
  }
\newcommand*\rel@kern[1]{\kern#1\dimexpr\macc@kerna}
\newcommand*\widebar[1]{\@ifnextchar^{{\wide@bar{#1}{0}}}{\wide@bar{#1}{1}}}
\newcommand*\wide@bar[2]{\if@single{#1}{\wide@bar@{#1}{#2}{1}}{\wide@bar@{#1}{#2}{2}}}
\newcommand*\wide@bar@[3]{%
  \begingroup
  \def\mathaccent##1##2{%
    \if#32 \let\macc@nucleus\first@char \fi
    \setbox\z@\hbox{$\macc@style{\macc@nucleus}_{}$}%
    \setbox\tw@\hbox{$\macc@style{\macc@nucleus}{}_{}$}%
    \dimen@\wd\tw@
    \advance\dimen@-\wd\z@
    \divide\dimen@ 3
    \@tempdima\wd\tw@
    \advance\@tempdima-\scriptspace
    \divide\@tempdima 10
    \advance\dimen@-\@tempdima
    \ifdim\dimen@>\z@ \dimen@0pt\fi
    \rel@kern{0.6}\kern-\dimen@
    \if#31
      \overline{\rel@kern{-0.6}\kern\dimen@\macc@nucleus\rel@kern{0.4}\kern\dimen@}%
      \advance\dimen@0.4\dimexpr\macc@kerna
      \let\final@kern#2%
      \ifdim\dimen@<\z@ \let\final@kern1\fi
      \if\final@kern1 \kern-\dimen@\fi
    \else
      \overline{\rel@kern{-0.6}\kern\dimen@#1}%
    \fi
  }%
  \macc@depth\@ne
  \let\math@bgroup\@empty \let\math@egroup\macc@set@skewchar
  \mathsurround\z@ \frozen@everymath{\mathgroup\macc@group\relax}%
  \macc@set@skewchar\relax
  \let\mathaccentV\macc@nested@a
  \if#31
    \macc@nested@a\relax111{#1}%
  \else
    \def\gobble@till@marker##1\endmarker{}%
    \futurelet\first@char\gobble@till@marker#1\endmarker
    \ifcat\noexpand\first@char A\else
      \def\first@char{}%
    \fi
    \macc@nested@a\relax111{\first@char}%
  \fi
  \endgroup
}
\newcommand{\omX}{\omega_X}
\renewcommand{\gl}{g_{\ast}}
\newtheorem{variant}[equation]{Variant}
\begin{document}

\vspace{\baselineskip}

\title{On direct images of pluricanonical bundles}

\author{Mihnea Popa}
\address{Department of Mathematics, Northwestern University,
2033 Sheridan Road, Evanston, IL 60208, USA} 
\email{\tt mpopa@math.northwestern.edu}

\author{Christian Schnell}
\address{Department of Mathematics, Stony Brook University,
Stony Brook, NY 11794, USA}
\email{\tt cschnell@math.sunysb.edu}

\subjclass[2000]{14F17; 14E30, 14F05}

\thanks{MP was partially supported by the NSF grant DMS-1101323, and CS by
grant DMS-1331641 and the SFB/TR45 ``Periods, moduli spaces, and arithmetic of
algebraic varieties'' of the DFG.}


\setlength{\parskip}{0.3\baselineskip}

\maketitle

\begin{abstract}
We show that techniques inspired by Koll\'ar and Viehweg's study of weak positivity, combined with vanishing for log-canonical pairs, lead to new generation and vanishing results for direct images of pluricanonical bundles. We formulate the strongest such results as Fujita conjecture-type statements, which are then shown to govern a range of fundamental properties of direct images of pluricanonical and pluriadjoint line bundles, like effective vanishing theorems, weak positivity, or generic vanishing.
\end{abstract}

\tableofcontents

\subsection{Introduction}
The purpose of this paper is twofold: on the one hand we show that techniques inspired by Koll\'ar and Viehweg's study of weak positivity, combined with vanishing theorems for log-canonical pairs, lead to new consequences regarding generation and 
vanishing properties for direct images of pluricanonical bundles. On the other hand, we formulate the strongest 
such results as Fujita conjecture-type statements, which are then shown to govern a range of fundamental properties of direct images of pluricanonical and pluriadjoint line bundles, like effective vanishing theorems, weak positivity, or generic vanishing.

\noindent
{\bf Vanishing, regularity, and Fujita-type statements.}
All varieties we consider in this paper are defined over an algebraically closed
field of characteristic zero. Recall to begin with the following celebrated conjecture.

\begin{conjecture}[Fujita]\label{fujita}
If $X$ is a smooth projective variety of dimension $n$, and $L$ is an ample line bundle on $X$, then 
$\omega_X\otimes L^{\otimes l}$ is globally generated for $l \ge n+1$. 
\end{conjecture}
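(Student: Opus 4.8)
The plan is to follow the standard approach to Fujita-type statements via multiplier ideals and Nadel vanishing, which succeeds in low dimensions and isolates precisely the obstruction that keeps the general conjecture open. Fix a point $x \in X$; to prove global generation of $\omega_X \otimes L^{\otimes l}$ it suffices to produce, for each such $x$, a section of $K_X + lL$ not vanishing at $x$. I would obtain this from Nadel vanishing: if $D$ is an effective $\QQ$-divisor with $D \equiv_{\mathrm{num}} c\, L$ for some rational $c$, such that the multiplier ideal $\mathcal{J}(X, D)$ equals $\OX$ away from $x$ while $\OX / \mathcal{J}(X,D)$ surjects onto the residue field at $x$, then the short exact sequence
\[
0 \to \mathcal{J}(X,D) \tensor \shf{O}_X(K_X + lL) \to \shf{O}_X(K_X+lL) \to \bigl(\OX/\mathcal{J}(X,D)\bigr) \tensor \shf{O}_X(K_X+lL) \to 0,
\]
together with the vanishing $H^1\bigl(X, \mathcal{J}(X,D) \tensor \shf{O}_X(K_X+lL)\bigr) = 0$ --- valid as soon as $(l-c)L$ is ample, i.e.\ $c < l$ --- forces $H^0(X, K_X+lL) \to \shf{O}_{X,x}/\mathfrak{m}_x$ to be surjective, yielding the section.

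Second, I would construct such a $D$ by forcing a singularity at $x$. By asymptotic Riemann--Roch, $h^0(X, kL) \sim k^n (L^n)/n!$, whereas imposing vanishing to order $m$ at $x$ costs only $\sim m^n/n!$ conditions; hence for $k \gg 0$ there is a divisor in $\lvert kL \rvert$ with $\operatorname{mult}_x > (1-\eps)\, k\, (L^n)^{1/n}$, and after rescaling an effective $\QQ$-divisor $D_0 \equiv_{\mathrm{num}} L$ with $\operatorname{mult}_x D_0$ arbitrarily close to $(L^n)^{1/n}$. Taking $D = c\, D_0$ with $c$ slightly larger than $n/(L^n)^{1/n}$ makes $\operatorname{mult}_x D > n$, so the pair $(X, D)$ is not klt at $x$. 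Since $L$ is merely ample we must allow $L^n = 1$ (as for $\shf{O}_{\PP^n}(1)$), which is exactly why one needs $c$ as large as $n$, and therefore $l \ge n+1$ to preserve $c < l$; this is the origin of the numerical bound in the conjecture.

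The genuinely hard step --- and the reason the conjecture is open for $n \ge 6$ --- is passing from ``$(X,D)$ is not klt at $x$'' to ``$x$ is an \emph{isolated} point of the non-klt locus, with $\mathcal{J}(X,D)$ cosupported exactly there.'' High multiplicity only guarantees that \emph{some} positive-dimensional subvariety $Z \ni x$ is a non-klt center, and one must cut $Z$ down. The standard remedy is a tie-breaking and restriction argument: perturb $D$ by a small multiple of a general divisor through $x$ to make the minimal non-klt center $Z$ unique, restrict to $Z$ via Kawamata's subadjunction, and induct on $\dim Z$, recreating a singularity at $x$ inside $Z$. Each stage of this induction requires producing enough sections on $Z$ with controlled vanishing at $x$ --- equivalently, lower bounds on the positivity of $L\restr{Z}$ that account for the codimension already consumed --- and it is precisely these numerical estimates that refuse to close up in high dimension.

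Finally, I would assemble the pieces: the vanishing above promotes the isolated non-klt point into a section of $K_X + lL$ nonzero at $x$, and running the construction with higher-order vanishing separates jets when stronger statements are wanted. In dimension $n \le 2$ the whole argument is unconditional (recovering Reider's theorem), and the inductive cutting-down has been carried out for small $n$. I expect the combinatorics of the subadjunction induction, rather than the vanishing theorem or the creation of singularities, to be the decisive obstacle in general --- which is why I would present this statement as a conjecture and instead prove the effective, Fujita-\emph{type} results that the remainder of the paper establishes unconditionally.
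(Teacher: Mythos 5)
The statement you were asked to prove is Conjecture~\ref{fujita} itself --- Fujita's freeness conjecture. The paper contains no proof of it (it is open in general; the paper only recalls the well-known case where $L$ is in addition globally generated, via Castelnuovo--Mumford regularity and Kodaira/Koll\'ar vanishing, and then proves relative pluricanonical analogues under that same extra hypothesis). Your write-up is therefore not, and does not claim to be, a proof: it is a sketch of the Nadel-vanishing strategy together with an honest identification of where it fails. That identification is accurate. The genuine gap is exactly the one you name: an effective $\QQ$-divisor $D \equiv c\,L$ with $\operatorname{mult}_x D > n$ only guarantees that \emph{some} non-klt center passes through $x$, not that $\mathcal{J}(X,D)$ is cosupported at $x$; cutting a positive-dimensional center down to the point by tie-breaking and Kawamata subadjunction consumes additional positivity of $L$ at every stage, and the numerics close up only to give the Angehrn--Siu bound $l \ge \tfrac{1}{2}n(n+1)+1$ rather than $l \ge n+1$. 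One small correction: the full conjecture is settled by these methods not just for $n \le 2$ (Reider) but also for $n=3$ (Ein--Lazarsfeld) and $n=4$ (Kawamata), so ``open for $n\ge 5$'' is the accurate statement at the time of this paper.

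It is worth contrasting your outline with what the paper actually does with this conjecture. Rather than attacking it, the authors impose the hypothesis that $L$ be ample \emph{and globally generated}, under which the entire non-klt-center machinery is unnecessary: Kodaira (resp.\ Koll\'ar, Ambro--Fujino) vanishing shows $\omega_X \otimes L^{\otimes n+1}$ (resp.\ $f_*\omega_X^{\otimes k}\otimes L^{\otimes l}$) is $0$-regular with respect to $L$, and the Castelnuovo--Mumford Lemma then gives global generation for free. Their Proposition~\ref{ample_powers} records the precise sense in which knowing the klt version of the Fujita-type statement for merely ample $L$ --- i.e.\ the output your sketched argument is trying to produce --- would upgrade their vanishing theorems. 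So your text is a correct diagnosis of why the statement must remain a conjecture here, but it should be presented as such, not spliced in as a proof.
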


It is well-known that Fujita's conjecture holds in the case when $L$ is ample and globally generated, based on Kodaira vanishing and the theory of Castelnuovo-Mumford regularity, and that this can be extended to the relative setting as follows:

\begin{proposition}[Koll\'ar]\label{very_ample}
Let $f\colon X \to Y$ be a morphism of projective varieties, with $X$ smooth and $Y$ of dimension $n$. If $L$ is an ample and globally generated line bundle on $Y$, then 
$$R^if_* \omega_X  \otimes L^{\otimes n+1}$$ 
is $0$-regular, and therefore globally generated for all $i$.
\end{proposition}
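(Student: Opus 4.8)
The plan is to deduce global generation from $0$-regularity by means of the Castelnuovo--Mumford regularity formalism, and to establish the $0$-regularity itself by invoking Koll\'ar's vanishing theorem for higher direct images of the canonical bundle. Recall that, with respect to the ample and globally generated line bundle $L$, a coherent sheaf $\mathcal{F}$ on $Y$ is called $0$-regular if
$$H^i\bigl(Y, \mathcal{F} \otimes L^{\otimes(-i)}\bigr) = 0 \quad \text{for all } i > 0,$$
and that Mumford's regularity theorem guarantees that a $0$-regular sheaf is globally generated. Thus it suffices to show that $\mathcal{F} := R^i f_* \omega_X \otimes L^{\otimes(n+1)}$ is $0$-regular, which is precisely the assertion that the proposition records, and global generation then follows formally.

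Unwinding the definition, $0$-regularity of $\mathcal{F}$ is equivalent to the vanishing
$$H^j\bigl(Y, R^i f_* \omega_X \otimes L^{\otimes(n+1-j)}\bigr) = 0 \quad \text{for all } j > 0.$$
Here I would first dispose of the range $j > n = \dim Y$, where the cohomology vanishes for trivial dimension reasons, so that only $1 \le j \le n$ remains to be treated. In that range one has $n + 1 - j \ge 1$, so that $L^{\otimes(n+1-j)}$ is a strictly positive tensor power of an ample line bundle and is therefore itself ample. This is the only place where the hypothesis $l \ge n+1$ (in the form of the exponent $n+1$) is used: it is exactly what keeps the twisting bundle ample across the entire relevant cohomological range.

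The essential input is then Koll\'ar's vanishing theorem, which asserts that for any morphism $f\colon X \to Y$ from a smooth projective variety and any ample line bundle $M$ on $Y$, one has $H^j(Y, R^i f_* \omega_X \otimes M) = 0$ for all $j > 0$. Applying this with $M = L^{\otimes(n+1-j)}$ for each $1 \le j \le n$ yields the displayed vanishing, hence the $0$-regularity of $\mathcal{F}$, and the global generation follows by Mumford's theorem as above. I expect the genuine obstacle to be located entirely in this vanishing statement: everything else is a bookkeeping consequence of the definition of regularity together with the elementary observation that positive powers of an ample bundle stay ample. If one wished to avoid citing Koll\'ar's theorem as a black box, the real work would be in reproving that vanishing---via the degeneration of the relevant Leray spectral sequence and a Kodaira-type argument on $X$---but for the purposes of this proposition I would take it as the key established ingredient.
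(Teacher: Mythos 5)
Your proposal is correct and is exactly the argument the paper intends: the $0$-regularity reduces, for $1 \le j \le n$, to Koll\'ar's vanishing theorem applied to the ample line bundle $L^{\otimes(n+1-j)}$, with the range $j > n$ trivial for dimension reasons, and global generation then follows from the Castelnuovo--Mumford Lemma. The paper gives no further detail, citing precisely these two ingredients.
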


Recall that a sheaf $\shF$ on $Y$ is \emph{$0$-regular}
with respect to an ample and globally generated line bundle $L$ if
$$H^i (Y, \shF \otimes L^{\otimes -i} ) = 0 \,\,\,\,{\rm ~for~all} \,\,\,\, i > 0.$$ 
The Castelnuovo-Mumford Lemma says that every $0$-regular sheaf is globally generated (see e.g. \cite{Lazarsfeld}*{Theorem 1.8.3}); the Proposition is then a consequence of Koll\'ar's vanishing theorem, 
recalled as Theorem \ref{kollar_vanishing} below.

An extension of Fujita's general conjecture to the relative case was formulated by Kawamata \cite{Kawamata1}*{Conjecture 1.3}, and proved in dimension up to four; the statement is that Proposition \ref{very_ample} should remain true for any $L$ ample, at least as long as the branch locus of the morphism $f$ is a divisor with simple normal crossings support (when the sheaves $R^i f_* \omega_X$ are locally free \cite{Kollar}). However, at least for $i =0$, we propose the following unconditional extension of Conjecture \ref{fujita}:
 
\begin{conjecture}\label{generalized_fujita}
Let $f\colon X\to Y$ be a morphism of smooth projective varieties, with $Y$ of dimension $n$, 
and let $L$ be an ample line bundle on $Y$. Then, for every $k\ge 1$, the sheaf 
$$f_* \omega_X^{\otimes k} \otimes L^{\otimes l}$$
is globally generated for $l \ge k(n+1)$.
\end{conjecture}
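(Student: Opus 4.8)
The plan is to split the statement into the case where $L$ is in addition globally generated, which I expect to be accessible through a pluricanonical refinement of Kollár's vanishing together with Castelnuovo–Mumford regularity, and the general ample case, which already for $k=1$ and $f=\id$ is Fujita's \conjectureref{fujita} and is therefore the genuine obstruction. Assume first that $L$ is ample and globally generated. Then, exactly as in the proof of \propositionref{very_ample}, the Castelnuovo–Mumford Lemma reduces global generation to $0$-regularity with respect to $L$, i.e. to
$$H^i\bigl(Y, f_*\omega_X^{\otimes k}\otimes L^{\otimes (l-i)}\bigr) = 0 \quad\text{for all } i>0.$$
Since $l-i \ge k(n+1)-n = k + n(k-1) \ge 1$ for $1\le i\le n=\dim Y$, each twisting bundle $L^{\otimes(l-i)}$ is ample and globally generated, so the entire globally generated case reduces to the single vanishing statement $H^i\bigl(Y, f_*\omega_X^{\otimes k}\otimes A\bigr)=0$ for $i>0$, applied to the sufficiently positive, globally generated bundles $A=L^{\otimes(l-i)}$ — a pluricanonical analogue of Kollár's vanishing (\theoremref{kollar_vanishing}).

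The heart of the matter is thus this vanishing for $k\ge 2$, where Kollár's theorem does not apply directly to $\omega_X^{\otimes k}$. Here I would follow the Viehweg-style covering philosophy and trade the pluricanonical bundle for an honest canonical sheaf on an auxiliary space. Concretely, after absorbing a large enough power of $A$ one arranges that a suitable pluricanonical-plus-pullback linear system has a general member with simple normal crossings support; its divisor $D$ then either defines a $k$-fold cyclic cover $\pi\colon X'\to X$, a resolution of which realizes $\omega_X^{\otimes k}$ (up to the permitted twist) as a direct summand of $\pi_*\omega_{X'}$, or defines a log canonical pair $(X,\tfrac{k-1}{k}D)$ with $K_X+\tfrac{k-1}{k}D$ equal to the relevant adjoint bundle. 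One then feeds $(f\circ\pi)_*\omega_{X'}$ into Kollár's vanishing, respectively applies the Ambro–Fujino vanishing theorem for log canonical pairs, and extracts the desired vanishing for $f_*\omega_X^{\otimes k}\otimes A$ from a direct summand. Arranging the coefficients and the positivity bookkeeping so that the residual twist stays ample is the step I expect to require the most care.

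The main obstacle is the passage from $L$ globally generated to $L$ merely ample. Taking $f=\id$ and $k=1$ shows that the full conjecture contains \conjectureref{fujita}, which is open in dimension $\ge 6$; no elementary Castelnuovo–Mumford argument can close this gap, since regularity techniques intrinsically require a globally generated polarization. I would therefore not expect to prove the statement in full generality. Realistically I would aim to \emph{(a)} establish the conjecture whenever $L$ is ample and globally generated, and \emph{(b)} prove the effective vanishing theorem of the preceding paragraph unconditionally, as these are precisely the inputs needed for the weak positivity and generic vanishing consequences advertised in the introduction, while recording the general ample case as a Fujita-type conjecture.
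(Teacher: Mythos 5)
You correctly read the situation: the statement is recorded in the paper as \conjectureref{generalized_fujita} precisely because it contains \conjectureref{fujita} (take $f=\id$, $k=1$) and is therefore out of reach in general; the paper proves only the ample and globally generated case (\theoremref{main}) and, separately, the case $\dim Y=1$ for arbitrary ample $L$ (\propositionref{curves}, which you do not mention and which uses Kawamata's semipositivity of $f_*\omega_{X/C}^{\otimes k}$ over curves rather than any covering or vanishing machinery). Your plan for the globally generated case is essentially the paper's: reduce global generation to $0$-regularity via Castelnuovo--Mumford, and prove the vanishing $H^i(Y, f_*\omega_X^{\otimes k}\otimes L^{\otimes l})=0$ for $i>0$ and $l\ge k(n+1)-n$ (\corollaryref{vanishing_main}) by choosing a general member $D$ of the relative-base-locus-free part of $\omega_X^{\otimes k}\otimes f^*L^{\otimes m}$, forming the pair with boundary $\frac{k-1}{k}(D+E)$ suitably corrected along the base divisor $E$, and applying Ambro--Fujino (or, in this untwisted case, Koll\'ar's klt vanishing). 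Your alternative cyclic-cover route also appears in the paper, as \propositionref{summand}, where it yields only the weaker threshold $m\le n+k$ and is kept for a self-contained proof of the generic vanishing application.

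The one idea your sketch omits --- and it is the step that turns the argument into an effective statement rather than one about $l\gg 0$ --- is the bootstrap on the minimal twist. If you ``absorb a large enough power'' $m$ of $L$ to achieve global generation and then run the pair construction, Ambro--Fujino only gives vanishing for $l>\frac{k-1}{k}m - t$, a bound depending on the unknown threshold $m$. The paper closes the loop by taking $m$ minimal with $f_*B\otimes L^{\otimes m}$ globally generated: the resulting vanishing plus Castelnuovo--Mumford gives global generation already for $l>\frac{k-1}{k}m - t + n$, so minimality forces $m\le \frac{k-1}{k}m + n + 1 - t$, i.e. $m\le k(n+1-t)$, and only then does the stated threshold $l\ge k(n+1)$ emerge --- as does the uniform bound $d=d(n,k)$ that the GV-sheaf application requires, since Serre vanishing is unavailable there. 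Without this self-improvement your ``positivity bookkeeping'' has no mechanism to terminate at an effective constant.
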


Our main result in this direction is a proof of a stronger version of Conjecture \ref{generalized_fujita} in the case of ample and globally generated line bundles, generalizing Proposition \ref{very_ample} for $i = 0$ to arbitrary powers.

\begin{theorem}\label{main}
Let $f\colon X\to Y$ be a morphism of projective varieties, with $X$ smooth and $Y$ of dimension $n$. If $L$ is an ample and globally generated line bundle on $Y$, and $k \ge 1$ an integer, then
 $$f_* \omega_X^{\otimes k}  \otimes L^{\otimes l}$$
is $0$-regular, and therefore globally generated, for $l \ge k(n+1)$.
\end{theorem}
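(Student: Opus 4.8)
The plan is to reduce the statement to a cohomology vanishing and then to produce that vanishing by building a log-canonical pair out of a pluricanonical divisor, to which a Koll\'ar-type vanishing theorem applies. By the Castelnuovo--Mumford Lemma it is enough to prove $0$-regularity, that is,
$$H^i\bigl(Y, f_*\omega_X^{\otimes k}\otimes L^{\otimes(l-i)}\bigr) = 0 \quad\text{for all } i>0,$$
whenever $l \ge k(n+1)$. Since the higher cohomology on $Y$ vanishes for $i>n$, and twisting by further copies of the ample, globally generated $L$ only strengthens such a statement, it suffices to treat $l = k(n+1)$ and $0 < i \le n$, i.e.\ to show
$$H^i\bigl(Y, f_*\omega_X^{\otimes k}\otimes L^{\otimes(k(n+1)-i)}\bigr) = 0,\quad 0<i\le n.$$
We may also assume $f$ is surjective, replacing $Y$ by $f(X)$ otherwise (this only lowers $n$, which improves the bound). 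The case $k=1$ is exactly \propositionref{very_ample} together with the stability of $0$-regularity under twisting by $L$, so I would induct on $k$.

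For the inductive step I want to realize the relevant line bundle as $K_X+D$ for a log-canonical pair. By the inductive hypothesis the sheaf $f_*\omega_X^{\otimes(k-1)}\otimes L^{\otimes(k-1)(n+1)}$ is globally generated; assuming it is nonzero, choose a divisor
$$D \in \bigl\lvert \omega_X^{\otimes(k-1)}\otimes f^* L^{\otimes(k-1)(n+1)} \bigr\rvert .$$
Then $\mathcal{O}_X(K_X+D) \cong \omega_X^{\otimes k}\otimes f^*L^{\otimes(k-1)(n+1)}$, and since $(n+1)-i \ge 1$ for $i\le n$ one can write
$$\omega_X^{\otimes k}\otimes f^*L^{\otimes(k(n+1)-i)} \cong \mathcal{O}_X(K_X+D)\otimes f^*L^{\otimes((n+1)-i)},$$
a twist of $\mathcal{O}_X(K_X+D)$ by the pullback of the \emph{ample} line bundle $L^{\otimes((n+1)-i)}$. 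If the pair $(X,D)$ is log canonical, the vanishing theorem for log-canonical pairs (the Ambro--Fujino generalization of Koll\'ar's theorem) then gives
$$H^i\bigl(Y, f_*\mathcal{O}_X(K_X+D)\otimes L^{\otimes((n+1)-i)}\bigr) = 0,\quad i>0,$$
and by the projection formula $f_*\mathcal{O}_X(K_X+D) \cong f_*\omega_X^{\otimes k}\otimes L^{\otimes(k-1)(n+1)}$, so this is precisely the vanishing we want.

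The main obstacle is that the divisor $D$ need not make $(X,D)$ log canonical: global generation of $f_*\omega_X^{\otimes(k-1)}\otimes L^{\otimes(k-1)(n+1)}$ on $Y$ only says that $\omega_X^{\otimes(k-1)}\otimes f^*L^{\otimes(k-1)(n+1)}$ is generated in the ``horizontal'' directions, and the linear system may acquire a vertical base locus along the base loci of the pluricanonical systems of the fibers, where $D$ can be badly singular. To handle this I would pass to a log resolution $\mu\colon X'\to X$; since $X$ is smooth the discrepancies are nonnegative integers, so $\mu_*\omega_{X'}^{\otimes k} = \omega_X^{\otimes k}$ and hence $f_*\omega_X^{\otimes k} = (f\mu)_*\omega_{X'}^{\otimes k}$, which transports the whole problem to $X'$. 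There a general member of the system becomes simple normal crossings away from the base locus, and the Koll\'ar--Viehweg weak positivity technique is used to control the coefficients coming from the base locus, keeping them $\le 1$ so that the boundary stays log canonical; this is exactly where the genuinely log-canonical, rather than Kawamata--Viehweg, form of the vanishing theorem is indispensable. The remaining delicate point is the edge case in which some intermediate sheaf $f_*\omega_X^{\otimes(k-1)}$ vanishes while $f_*\omega_X^{\otimes k}$ does not (as already happens for families of Enriques surfaces); I expect this to be absorbed by proving the statement in the slightly more general form for pluriadjoint bundles $\omega_X^{\otimes k}\otimes B$, which makes the induction self-contained.
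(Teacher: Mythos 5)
Your overall skeleton --- reduce to vanishing via Castelnuovo--Mumford regularity, realize $\omega_X^{\otimes k}\otimes f^*L^{\otimes(\cdot)}$ as $K_X+(\text{boundary})+f^*(\text{ample})$, and apply a Koll\'ar-type vanishing theorem --- is indeed that of the paper (Theorem \ref{vanishing_main_lc} and Corollary \ref{vanishing_main}). But the way you build the boundary has a gap that your acknowledged ``delicate points'' do not cover. You take $D$ with coefficient $1$ in the $(k-1)$-st power system $\lvert\omega_X^{\otimes(k-1)}\otimes f^*L^{\otimes(k-1)(n+1)}\rvert$. After a log resolution a general member splits as $D_0+E$ with $D_0$ smooth and $E$ the relative base divisor of \emph{that} system; $E$ is a fixed component of every member and can carry arbitrarily large multiplicities, so $(X,D)$ fails to be log canonical for every choice of $D$. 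The only way to ``control the coefficients'' is to subtract the integral part of the offending piece from the line bundle before pushing forward --- but then you need $f_*\bigl(\omega_X^{\otimes k}\otimes f^*(\cdots)\otimes\OX(-E')\bigr)\simeq f_*\omega_X^{\otimes k}\otimes(\cdots)$, and this identity only holds when $E'$ sits inside the relative base divisor of the sheaf actually being pushed forward, namely $\omega_X^{\otimes k}$. Your $E$ is the base divisor of the $(k-1)$-st power and bears no containment relation to it, so after the correction you would prove vanishing for a proper subsheaf of $f_*\omega_X^{\otimes k}\otimes L^{\otimes l}$, not for the sheaf itself. The second gap is the one you flag: the induction needs $f_*\omega_X^{\otimes(k-1)}\neq 0$, which fails (e.g.\ for Enriques fibers) at every other step, and passing to $\omega_X^{\otimes k}\otimes B$ does not repair this, since the relevant $h^0$ on the fibers can still vanish.

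The paper avoids both problems with a different boundary and no induction on $k$. Let $m$ be the \emph{minimal} integer such that $f_*\omega_X^{\otimes k}\otimes L^{\otimes m}$ is globally generated (such $m$ exists by ampleness, with no a priori bound); write $\omega_X^{\otimes k}\otimes f^*L^{\otimes m}\simeq\OX(D+E)$ with $E$ the relative base divisor of $\omega_X^{\otimes k}$ itself and $D$ smooth, and use the \emph{fractional} boundary $\frac{k-1}{k}D+\bigl(\frac{k-1}{k}E-\bigl\lfloor\frac{k-1}{k}E\bigr\rfloor\bigr)$, which is automatically klt; subtracting $\bigl\lfloor\frac{k-1}{k}E\bigr\rfloor$ is now harmless precisely because $E$ is the base divisor of the correct sheaf (see the isomorphism \eqref{subtraction}). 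Koll\'ar's klt vanishing then gives $H^i(Y,f_*\omega_X^{\otimes k}\otimes L^{\otimes l})=0$ once $l$ exceeds roughly $\frac{k-1}{k}m$, hence $0$-regularity and global generation once $l$ exceeds roughly $\frac{k-1}{k}m+n$; minimality of $m$ forces $m\le\frac{k-1}{k}m+n+1$, i.e.\ $m\le k(n+1)$. This self-improving inequality on $m$ is the key idea your proposal is missing: with it one needs no sections of lower powers, no induction on $k$, and no coefficient-one boundary, and (per Remark \ref{only_klt}) not even genuinely log-canonical vanishing in the case $\Delta=0$.
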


This follows in fact from a more general effective vanishing theorem for direct
images of powers of canonical bundles, which is Koll\'ar vanishing when
$k=1$; see Corollary \ref{vanishing_main}. We also observe in Proposition
\ref{ample_powers} that just knowing the (klt version of the) Fujita-type Conjecture \ref{generalized_fujita} for 
$k =1$ would imply a similar vanishing theorem when $L$ is only ample. 
Using related methods, we find analogous statements in the contexts of
pluriadjoint bundles and of log-canonical pairs as well.
We will call a \emph{fibration} a surjective morphism whose general fiber is irreducible. 

\begin{variant}\label{main_adjoint}
Let $f\colon X\to Y$ be a fibration between projective varieties, with $X$ smooth and $Y$ of dimension $n$. Let $M$ be a nef and $f$-big line bundle on $X$. If $L$ is an ample and globally generated line bundle on $Y$, and $k \ge 1$ an integer, then 
$$f_* (\omega_X\otimes M)^{\otimes k}  \otimes L^{\otimes l}$$ 
is $0$-regular, and therefore globally generated, for $l \ge k(n+1)$.
\end{variant}

\begin{variant}\footnote{The is of
course a generalization of Theorem \ref{main}. We chose to state it separately 
in order to preserve the simplicity of the main point, as will be done a few times throughout the paper.}\label{main_lc}
Let $f\colon X\to Y$ be a morphism of projective varieties, with $X$ normal and 
$Y$ of dimension $n$, and consider a log-canonical 
$\RR$-pair $(X, \Delta)$ on $X$. Consider a line bundle $B$ on $X$ such that 
$B \sim_{\RR} k(K_X + \Delta + f^* H)$ for some $k \ge 1$, where $H$ is an ample $\RR$-Cartier $\RR$-divisor on $Y$.
If $L$ is an ample and globally generated line bundle on $Y$, and $k \ge 1$ an integer, then
 $$f_* B  \otimes L^{\otimes l} \,\,\,\,{\rm with} \,\,\,\, l \ge k (n+1 - t)$$ 
is $0$-regular, and so globally generated, where 
$t : = {\rm sup}~\{s \in \RR ~|~ H - s L {\rm ~is~ample}\}$.
\end{variant}

All of these results are consequences of our main technical result, stated next. It can be seen both as an effective vanishing theorem for direct images of powers, and as a partial extension of Ambro-Fujino vanishing (recalled as Theorem \ref{ambro_fujino} below) to arbitrary log-canonical pairs.

\begin{theorem}\label{vanishing_main_lc}
Let $f\colon X\to Y$ be a morphism of projective varieties, with $X$ normal and 
$Y$ of dimension $n$, and consider a log-canonical 
$\RR$-pair $(X, \Delta)$ on $X$. Consider a line bundle $B$ on $X$ such that 
$B \sim_{\RR} k(K_X + \Delta + f^* H)$ for some $k \ge 1$,
where $H$ is an ample $\RR$-Cartier $\RR$-divisor on $Y$.
If $L$ is an ample and globally generated line bundle on $Y$, then
$$H^i (Y,  f_* B  \otimes L^{\otimes l})= 0 \,\,\,\, {\rm for~all~}i >0 \,\,\,\,{\rm and} \,\,\,\, 
l \ge (k-1)(n+1 - t) - t +1,$$ 
where $t : = {\rm sup}~\{s \in \RR ~|~ H - s L {\rm ~is~ample}\}$.
\end{theorem}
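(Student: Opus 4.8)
The plan is to induct on the power $k$, with the base case $k = 1$ being exactly the Ambro--Fujino vanishing theorem (\theoremref{ambro_fujino}). Indeed, for $k=1$ we have $B \sim_{\RR} K_X + \Delta + f^* H$, hence $(B + f^*(lL)) - (K_X + \Delta) \sim_{\RR} f^*(H + lL)$; writing $H + lL = (H - tL) + (l+t)L$ and using that $H - tL$ is nef by the definition of $t$, the bound $l \ge 1 - t$ makes $H + lL$ ample, so \theoremref{ambro_fujino} applied to the log-canonical pair $(X, \Delta)$ and the Cartier divisor $B + f^*(lL)$ gives $H^i(Y, f_* B \otimes L^l) = 0$ for all $i > 0$. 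The guiding principle for the inductive step is to reproduce this situation: I would rewrite $B + f^*(lL)$ as $K_X + (\text{an enlarged boundary}) + f^*(\text{something ample})$ and again invoke Ambro--Fujino.

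For the step from $k-1$ to $k$, the induction hypothesis together with the Castelnuovo--Mumford Lemma (applied just as \theoremref{main} is deduced from the present theorem) shows that the direct image of the $(k-1)$-st power becomes globally generated on $Y$ after twisting by $L$ to a power $\ge (k-1)(n+1-t)$. I would feed this positivity into a Viehweg-style weak-positivity argument, in the robust form that is insensitive to clearing denominators, to produce an effective $\RR$-divisor
\[ E \sim_{\RR} (k-1)(K_X + \Delta + f^* H) + f^*(mL), \qquad m := (k-1)(n+1-t), \]
on $X$ (if the relevant direct image vanishes the conclusion is trivial, so I may assume such an $E$ exists).

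The role of $m$ is purely numerical: a direct computation gives
\[ (B + f^*(lL)) - (K_X + \Delta + E) \sim_{\RR} f^*\bigl(H + (l-m)L\bigr), \]
and $H + (l-m)L = (H - tL) + (l - m + t)L$ is ample exactly when $l \ge m - t + 1 = (k-1)(n+1-t) - t + 1$, which is the asserted threshold. Consequently, as soon as the enlarged pair $(X, \Delta + E)$ is log canonical, \theoremref{ambro_fujino} applied to it and the Cartier divisor $B + f^*(lL)$ yields the vanishing; since $f_* \mathcal{O}_X(B + f^*(lL)) = f_* B \otimes L^l$ by the projection formula, this is exactly $H^i(Y, f_* B \otimes L^l) = 0$ for $i > 0$, closing the induction.

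The hard part is the last hypothesis — that $(X, \Delta + E)$ remains log canonical. The coefficient of $E$ is forced to be exactly $1$ (any smaller coefficient leaves behind an un-absorbable positive multiple of $K_X + \Delta$), so a small-coefficient Bertini argument is not available. Worse, $E$ is only a general member of a system that is globally generated after pushforward to $Y$ but is typically not base-point free on $X$, since the evaluation map $f^* f_* \to \mathrm{id}$ is merely generically surjective; this failure of surjectivity is precisely the phenomenon at the heart of Koll\'ar and Viehweg's weak positivity. To control it I would pass to a common log resolution of $(X, \Delta)$ and of the base ideal of the system, split the total transform of $E$ into its moving and fixed parts, take the moving part general so that it meets the transform of $\Delta$ in simple normal crossings (keeping the pair log canonical there by Bertini), and absorb the fixed part — which lies over the locus where $f$ is not well behaved — using the stability of Ambro--Fujino vanishing under such birational models. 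Executing this while keeping exact track of the direct image to $Y$ is the technical core of the proof.
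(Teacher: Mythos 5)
Your proposal correctly identifies the shape of the bound and the role of Ambro--Fujino, but it contains a genuine gap at exactly the point you flag as ``the hard part,'' and the paper's proof avoids that point by a structurally different device. Two concrete problems. First, the induction does not typecheck: the theorem is a statement about a \emph{line bundle} $B$ with $B \sim_{\RR} k(K_X+\Delta+f^*H)$, and there is in general no line bundle representing $(k-1)(K_X+\Delta+f^*H)$ (this class is only an $\RR$-divisor class, and $\frac{k-1}{k}B$ is not integral), so the inductive hypothesis cannot be applied to produce the sheaf whose global generation you need; ``insensitive to clearing denominators'' does not repair this. Second, and more seriously, the coefficient-$1$ divisor $E$ you add to the boundary is the divisor of a section of a twisted pluricanonical-type system, whose fixed part can have components of multiplicity $\geq 2$ and can share components with $\Delta$; with coefficient exactly $1$ there is no room to absorb this, and your proposed fix (splitting into moving and fixed parts on a log resolution and ``absorbing'' the fixed part) does not address either the multiplicities or the fact that discarding the fixed part changes the sheaf being pushed forward unless one proves it is dominated by the relative base divisor of $B$ itself.

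The paper's proof is not an induction on $k$. It fixes the \emph{minimal} $m\geq 0$ with $f_*B\otimes L^{\otimes m}$ globally generated, takes a general section of $B\otimes f^*L^{\otimes m}$ itself (after a log resolution making the relative base divisor $E$ and $\Delta$ have SNC support), writes $B\otimes f^*L^{\otimes m}\simeq \OX(D+E)$, and then adds only the fraction $\frac{k-1}{k}(D+E)$ to the boundary --- so every new coefficient is strictly less than $1$ before corrections. The integral excess is removed by subtracting a carefully chosen divisor $E'\preceq E$ (the $\gamma_i=a_i+\frac{k-1}{k}s_i$ bookkeeping), which leaves the pushforward unchanged precisely because $E$ is the relative base divisor of $B$, via $f_*(B\otimes\OX(-E'))\simeq f_*B$. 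Ambro--Fujino then gives vanishing for $l>\frac{k-1}{k}m-t$, and the loop is closed not by induction but by Castelnuovo--Mumford regularity: the vanishing forces $f_*B\otimes L^{\otimes(l_0+n)}$ to be globally generated, and minimality of $m$ yields $m\leq k(n+1-t)$, hence the stated threshold. This ``use the $k$-th power's own section, scaled by $\frac{k-1}{k}$'' step is the missing idea that makes the log-canonicity of the enlarged pair automatic; without it, your argument stalls exactly where you say it does.
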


The proof of this result relies on a variation of a method used by Viehweg in the study of weak positivity, and on the 
use of the Ambro-Fujino vanishing theorem.  Shifting emphasis from weak positivity to vanishing turns out to lead to stronger statements, as it was already pointed out by Koll\'ar \cite{Kollar}*{\S3} in the case $k=1$; his point of view, essentially based on regularity, is indeed a crucial ingredient  in the applications. 

One final note in this regard is that all the vanishing theorems used in the paper hold for higher direct images as well. At the moment we do not know however how to obtain statements similar to those above for higher direct images, for instance for $R^i f_* \omega_X^{\otimes k}$ with $i >0$.

\noindent
{\bf Applications.}
The Fujita-type statements in Theorem \ref{main} and its variants turn out to govern a number of fundamental 
properties of direct images of pluricanonical and pluriadjoint bundles. Besides the vanishing statements discussed above, we sample a few here, and refer to the main body of the paper for full statements. To begin with, we deduce in \S4 an effective version of Viehweg's weak positivity theorem for sheaves of the form $f_*\omega_{X/Y}^{\otimes k}$ for arbitrary $k \ge 1$, just as Koll\'ar did in the case $k=1$; we leave the rather technical statement, Theorem \ref{generic}, for the main text.  The same method applies to pluriadjoint bundles, see Theorem \ref{generic_adjoint}, and in this case even the non-effective weak positivity consequence stated below is new. The case $k=1$ is again due to Koll\'ar and Viehweg; see also \cite{Hoering}*{Theorem 3.30} for a nice exposition.

 \begin{theorem}\label{WP_adjoint}
If $f\colon X \to Y$ is a fibration between smooth projective varieties, and $M$ is a nef and $f$-big line bundle on $X$, 
then $f_* (\omega_{X/Y}\otimes M)^{\otimes k}$ is weakly positive for every $k \ge 1$. 
\end{theorem}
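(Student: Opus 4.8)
The plan is to deduce the weak positivity of $f_*(\omega_{X/Y} \otimes M)^{\otimes k}$ from the global generation statement in Variant~\ref{main_adjoint}, following the strategy that Koll\'ar and Viehweg use in the case $k=1$. Recall that a torsion-free sheaf $\shF$ on $Y$ is \emph{weakly positive} if, roughly speaking, for every ample line bundle $A$ and every integer $a > 0$, the sheaf $\Sym^{[ab]}(\shF) \otimes A^{\otimes b}$ is globally generated over a fixed dense open subset for all $b \gg 0$. The essential point is therefore to produce, for a suitably chosen ample $A$, global generation of high symmetric powers of $f_*(\omega_{X/Y} \otimes M)^{\otimes k}$ twisted by small positive multiples of $A$, and to feed this into the definition.

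First I would reduce to a statement about global generation after twisting by an ample and globally generated line bundle, since Variant~\ref{main_adjoint} is stated precisely in that setting. The standard device here is Viehweg's fiber-product trick: for an integer $s \ge 1$, form the $s$-fold fiber product $X^s = X \times_Y \cdots \times_Y X$, resolve it to a smooth variety $X^{(s)}$ with induced fibration $f^{(s)}\colon X^{(s)} \to Y$, and relate $f^{(s)}_*(\omega_{X^{(s)}/Y} \otimes M^{(s)})^{\otimes k}$ to a quotient of $\bigl( f_*(\omega_{X/Y} \otimes M)^{\otimes k} \bigr)^{\otimes s}$ over the locus where $f$ is smooth. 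The nef and $f$-big line bundle $M$ pulls back to a corresponding nef and relatively big bundle $M^{(s)}$ on the fiber product, so that Variant~\ref{main_adjoint} applies to the new fibration as well. This is the key geometric input that converts symmetric (or tensor) powers of the original sheaf into a genuine direct image on the \emph{same} base $Y$, to which the Fujita-type global generation theorem can be applied directly.

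Next I would fix an ample and globally generated line bundle $L$ on $Y$ together with an arbitrary ample line bundle $A$, and arrange that $A \otimes L^{\otimes -\,c}$ is still ample for some fixed $c$; then Variant~\ref{main_adjoint}, applied to $f^{(s)}$ and twisted appropriately, gives that $f^{(s)}_*(\omega_{X^{(s)}/Y} \otimes M^{(s)})^{\otimes k} \otimes L^{\otimes l}$ is globally generated for $l \ge k(n+1)$, hence so is its twist by $A^{\otimes b}$ for the relevant range of $b$. Using the quotient map coming from the fiber-product trick, global generation descends to the symmetric power $\Sym^{[s]}\bigl( f_*(\omega_{X/Y} \otimes M)^{\otimes k} \bigr) \otimes A^{\otimes b}$ over the open set where $f$ is smooth, which is exactly the generation required in the definition of weak positivity once $s$ and $b$ are matched to $a$ and the ample twist. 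Letting $s \to \infty$ makes the contribution of the fixed positive twist arbitrarily small relative to the symmetric power, which is what makes the weak positivity estimate work for every $a$.

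The main obstacle I expect is the careful bookkeeping in the fiber-product comparison: identifying $f^{(s)}_*(\omega_{X^{(s)}/Y} \otimes M^{(s)})^{\otimes k}$ with (a quotient over the smooth locus of) the tensor or symmetric power of $f_*(\omega_{X/Y} \otimes M)^{\otimes k}$ requires controlling the discrepancy between $\omega_{X^{(s)}/Y}$ and the external product of the relative canonical bundles of the factors, which involves the resolution $X^{(s)} \to X^s$ of the singular fiber product and a base-change/flatness argument over the smooth locus of $f$. One must also check that $M^{(s)}$ remains nef and $f^{(s)}$-big after resolution, and that the pluricanonical powers behave compatibly. Away from this combinatorial and birational core, the argument is a formal consequence of Variant~\ref{main_adjoint} together with the standard reformulation of weak positivity in terms of global generation of twisted symmetric powers; the relative normalization $\omega_{X/Y}$ in place of $\omega_X$ costs nothing since the fiber-product construction naturally produces relative canonical bundles.
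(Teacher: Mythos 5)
Your overall strategy is exactly the paper's: use Viehweg's fiber-product trick to turn the reflexive tensor power $f_*\left((\omega_{X/Y}\otimes M)^{\otimes k}\right)^{[\otimes s]}$ into a direct image $f^{(s)}_*\left((\omega_{X^{(s)}/Y}\otimes M^{(s)})^{\otimes k}\right)$ on the same base $Y$, apply Variant~\ref{main_adjoint} to make the latter globally generated after a twist by a fixed ample line bundle independent of $s$ (the paper takes $A^{\otimes k}$ with $A = \omega_Y \otimes L^{\otimes n+1}$), and let $s$ grow to absorb the fixed twist into the definition of weak positivity. That skeleton is correct and matches Theorem~\ref{generic_adjoint} and the deduction following it.

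There is, however, one genuine missing step. For $k \ge 2$ the comparison map $\varphi\colon f^{(s)}_*\left((\omega_{X^{(s)}/Y}\otimes M^{(s)})^{\otimes k}\right) \to f_*\left((\omega_{X/Y}\otimes M)^{\otimes k}\right)^{[\otimes s]}$ must be constructed over an open set $U \subseteq Y$ whose complement has codimension at least $2$, not merely over the smooth locus of $f$: the argument needs $\varphi$ to extend to a morphism of sheaves on all of $Y$ (using reflexivity of the target) so that global sections of the globally generated source actually yield global sections of the target. Producing $\varphi$ in codimension one requires $X^s$ to be normal and Gorenstein there, so that one has the map $t\colon \mu_*\omega_{X^{(s)}/Y}^{\otimes k} \to \omega_{X^s/Y}^{\otimes k}$; this holds only under the additional hypothesis that $f$ has generically reduced fibers in codimension one. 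The paper therefore proves the effective generation statement under that hypothesis and deduces the general case of Theorem~\ref{WP_adjoint} by passing to semistable reduction along the lines of \cite{Viehweg1}. Your sketch treats the fiber-product comparison as routine base-change bookkeeping over the smooth locus of $f$; for an arbitrary fibration and $k \ge 2$ that is not sufficient, and the semistable reduction step is the ingredient you need to add. A minor further point: you describe $f^{(s)}_*(\cdots)$ as a quotient of the tensor power, but the map that does the work goes the other way, from the direct image on the fiber product to the reflexive hull of the tensor power, which is what lets global generation propagate in the required direction.
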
 

With this result at hand, Viehweg's machinery for studying Iitaka's conjecture can be applied to 
deduce the adjoint bundle analogue of his result on the additivity of the Kodaira dimension over a base of 
general type.

\begin{theorem}\label{Iitaka}
Let $f\colon X\to Y$ be a fibration between smooth projective varieties, and let $M$ be a nef and $f$-big line bundle on $X$. 
We denote by $F$ the general fiber of $f$, and by $M_F$ the restriction of $M$ to $F$. Then:

\noindent
(i) If $L$ is an ample line bundle on $Y$, and $k >0$, then
$$\kappa \left( (\omega_X\otimes M)^{\otimes k} \otimes f^*L \right) = 
\kappa (\omega_F \otimes M_F) + \dim Y.$$

\noindent
(ii) If $Y$ is of general type, then 
$$\kappa (\omega_X \otimes M) = \kappa (\omega_F \otimes M_F) + \dim Y.$$
\end{theorem}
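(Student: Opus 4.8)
The plan is to run Viehweg's method for the additivity of the Kodaira dimension, with the weak positivity furnished by \theoremref{WP_adjoint} as the single nontrivial input; throughout, the pluriadjoint bundle $P:=\omega_{X/Y}\otimes M$ plays the part that $\omega_{X/Y}$ plays in Viehweg's treatment of the conjecture $C_{n,m}$. Two facts are used repeatedly: $P$ restricts to $\omega_F\otimes M_F$ on a general fiber (adjunction), and $\omega_X\otimes M=P\otimes f^*\omega_Y$. In both parts the inequality ``$\le$'' is the easy one: Iitaka's easy addition gives $\kappa(X,D)\le\kappa(F,D|_F)+\dim Y$ for any line bundle $D$, and since $f^*L$ and $f^*\omega_Y$ are trivial on $F$ while $(\omega_X\otimes M)|_F=\omega_F\otimes M_F$, both right-hand sides collapse to $\kappa(\omega_F\otimes M_F)+\dim Y$. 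The content is the reverse inequality.

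For the lower bound I would fix $a\gg 0$, sufficiently divisible, so that $\shF_a:=f_*P^{\otimes a}$ has generic rank $h^0\bigl(F,(\omega_F\otimes M_F)^{\otimes a}\bigr)$, by cohomology and base change over the smooth locus of $f$, and so that this rank grows like $a^{\kappa}$, where $\kappa:=\kappa(\omega_F\otimes M_F)$; equivalently, $a$ is large enough that $|P^{\otimes a}|$ cuts out the Iitaka dimension of the general fiber. By \theoremref{WP_adjoint}, each $\shF_a$ is weakly positive.

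The engine is the principle that a weakly positive sheaf becomes big after tensoring with a big line bundle on $Y$, so that $h^0$ of the $m$-th symmetric power of the twist grows like $m^{\dim Y+\rk\shF_a-1}$. The requisite big bundle is $\omega_Y$ in part (ii), big exactly because $Y$ is of general type, and the ample $f^*L$ in part (i), so that no hypothesis on $Y$ is needed. To turn such sections into sections of a high power of the bundle under study ($\omega_X\otimes M$ in (ii), its $f^*L$-twist in (i)) while keeping both the base contribution $\dim Y$ and the fiber contribution $\kappa$, I would use Viehweg's fiber-product trick: on a desingularization of a suitable fiber power $X\times_Y\cdots\times_Y X$ the awkward multiplication map $\Sym^m\shF_a\to\shF_{am}$, whose kernel is not controlled, is replaced by a tensor power that is once more weakly positive --- now by \theoremref{WP_adjoint} applied to the fiber product --- and one reads off the bound $h^0\bigl(X,(\omega_X\otimes M)^{\otimes am}\bigr)\gtrsim m^{\kappa+\dim Y}$. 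This yields $\kappa\ge\kappa(\omega_F\otimes M_F)+\dim Y$, hence equality. One can also derive (ii) from (i): writing a large multiple of $\omega_Y$ as $A\otimes\OY(E)$ with $A$ ample and $E$ effective, the effective part contributes an $f^*E$ that only raises the Kodaira dimension, reducing the estimate to the ample-twist mechanism of (i).

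The step I expect to be the main obstacle is precisely this transport of sections. Weak positivity is only a nef-type positivity and produces no sections by itself; it is the interaction of the big twist with the fiber-product construction, together with the verification that the kernel of $\Sym^m\shF_a\to\shF_{am}$ does not destroy the expected polynomial growth, that carries the argument. Everything else --- easy addition, the base-change computation of $\rk\shF_a$, and the bigness of a weakly positive sheaf after a big twist --- is formal once \theoremref{WP_adjoint} is in hand.
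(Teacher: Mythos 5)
Your outline follows the same strategy as the paper: easy addition for the inequality $\le$, Viehweg's weak positivity machinery with \theoremref{WP_adjoint} as the essential input for the reverse inequality, and Kodaira's lemma to reduce (ii) to an ample (or big) twist. But precisely at the step you yourself flag as ``the main obstacle'' your argument is a plan rather than a proof, and it is also the point where the paper takes a genuinely different and much lighter route. You propose to establish the lower bound by asymptotic section counting, showing $h^0\bigl(X,(\omega_X\otimes M)^{\otimes am}\bigr)\gtrsim m^{\kappa+\dim Y}$ after controlling the multiplication map $\Sym^m\shF_a\to\shF_{am}$ via the fiber-product trick; this is Viehweg's original mechanism and can be carried out, but you give no argument for how generic global generation of $S^{[ab]}\shF\otimes A^{\otimes b}$ translates into the claimed polynomial growth, nor for why the image of the multiplication map remains large enough to retain both the fiber contribution $\kappa$ and the base contribution $\dim Y$. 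The paper sidesteps all of this with the Fujita--Mori criterion (\lemmaref{technical}): $\kappa(N)=\kappa(N_F)+\dim Y$ holds if and only if there is an inclusion $f^*L\hookrightarrow N^{\otimes m}$ for some big line bundle $L$ on $Y$ and some $m>0$. With this in hand one needs only a \emph{single} nonzero section, obtained by combining the generic global generation of $S^{[2b]}f'_*(\omega_{X'/Y'}\otimes\tau'^*M)^{\otimes k}\otimes L'^{\otimes b}$ (from \theoremref{WP_adjoint}) with the mere nontriviality --- not surjectivity --- of the multiplication map on fibers; no growth estimate is ever required.

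A second, more technical omission: before multiplication of sections can be used, the reflexive hull $f_*\bigl((\omega_{X/Y}\otimes M)^{\otimes p}\bigr)^{[\otimes s]}$ appearing in the weak positivity statement must be identified with an honest direct image. The paper does this by first passing to the birational model of \cite{Viehweg1}*{Lemma 7.3}, a diagram $\tau\colon Y'\to Y$, $\tau'\colon X'\to X$, on which every $f'$-exceptional divisor $B$ is $\tau'$-exceptional, so that the reflexive hull equals $f'_*(\omega_{X'/Y'}(B)\otimes\tau'^*M)^{\otimes p}$ and, crucially, $\tau'_*\omega_{X'}^{\otimes k}(kB)\simeq\omega_X^{\otimes k}$ lets one descend the conclusion back to $X$. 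Your sketch never addresses this passage, and without it neither the multiplication map nor the final comparison of Iitaka dimensions on $X'$ and $X$ is justified. If you replace the asymptotic counting by \lemmaref{technical} and insert the reduction to the model of Lemma 7.3, your outline becomes the paper's proof.
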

 
In a different direction, the method involved in the proof of Theorem \ref{main}
(more precisely Corollary \ref{vanishing_main}) leads to a generic vanishing statement
for pluricanonical bundles. Let $f\colon X\to A$ be a morphism from a smooth
projective variety to an abelian variety. In \cite{Hacon},  Hacon showed that the
higher direct images $R^i f_* \omega_X$ satisfy generic vanishing, i.e.  are
GV-sheaves on $A$; see Definition \ref{GV_definition}. This refines the well-known
generic vanishing theorem of Green and Lazarsfeld \cite{GL1}, and is crucial in
studying the birational geometry of irregular varieties. In \S5
we deduce the following statement, which is somewhat surprising given our previous
knowledge about the behavior of powers of $\omega_X$. 

\begin{theorem}\label{GV}
If $f\colon X \to A$ is a morphism from a smooth projective variety to an abelian variety, then $f_*\omega_X^{\otimes k}$ is a GV-sheaf for every $k \ge 1$. 
\end{theorem}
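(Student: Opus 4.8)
The plan is to deduce generic vanishing for $f_* \omega_X^{\otimes k}$ from the effective vanishing in Corollary \ref{vanishing_main} (the $\Delta = 0$ case of Theorem \ref{vanishing_main_lc}), using the standard cohomological characterization of GV-sheaves. Recall that to verify that a coherent sheaf $\shF$ on an abelian variety $A$ is a GV-sheaf, it suffices to check that for a suitable ample line bundle the twists $\shF \otimes \alpha$ have vanishing higher cohomology for generic $\alpha$; more precisely, one uses the criterion (via the Fourier--Mukai transform, or Hacon's approach with \emph{any} sufficiently positive ample $L$) that
\[
H^i\bigl(A, f_* \omega_X^{\otimes k} \otimes \widehat{L}^{\vee} \otimes \alpha\bigr) = 0
\quad\text{for all } i > 0 \text{ and all } \alpha \in \Pic^0(A),
\]
where $\widehat{L}$ is an ample line bundle on the dual abelian variety $\Ah$ of sufficiently large degree. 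The point is that the GV condition is detected after pulling back by a suitable isogeny and twisting by a high power of an ample bundle, so it will follow once we have uniform higher-cohomology vanishing after such a twist.

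The key steps, in order, are as follows. First I would fix an ample line bundle $L$ on $A$ that is \emph{globally generated} (symmetric principal polarizations pulled back appropriately, or simply $L = M^{\otimes 2}$ for $M$ ample so as to be both ample and globally generated). Second, for any $\alpha \in \Pic^0(A)$, I would observe that $\alpha$ is $f$-trivially positive and that $L^{\otimes l} \otimes \alpha$ remains ample and globally generated for $l$ large, so that Corollary \ref{vanishing_main} applies: taking $H = L$ (viewed as an ample $\RR$-Cartier divisor) and $B = \omega_X^{\otimes k} \otimes f^* L^{\otimes ?}$ in the $\Delta = 0$ form of the vanishing theorem, one gets
\[
H^i\bigl(A, f_* \omega_X^{\otimes k} \otimes L^{\otimes l} \otimes \alpha\bigr) = 0
\quad\text{for all } i > 0,
\]
for all $l$ above the explicit bound of Theorem \ref{vanishing_main_lc} and, crucially, \emph{uniformly in} $\alpha$, since twisting by a numerically trivial $\alpha$ does not affect ampleness thresholds, the bound $t$, or the vanishing. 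Third, I would feed this uniform vanishing into the cohomological criterion for GV-sheaves (for instance Hacon's characterization, or the equivalent statement that the Fourier--Mukai transform $\derR\widehat{S}(\derR\Delta \shF)$ is concentrated in degree $\dim A$), concluding that $f_*\omega_X^{\otimes k}$ is a GV-sheaf.

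The main obstacle is arranging the vanishing to hold \emph{simultaneously for every} $\alpha \in \Pic^0(A)$ with a single, $\alpha$-independent twist. This is exactly where the relative Fujita-type input is essential rather than incidental: ordinary Kodaira or Kollár vanishing would give vanishing only for a fixed bundle, but the GV criterion demands control over the whole family $\{L^{\otimes l} \otimes \alpha\}_{\alpha}$, and the effective bound in Theorem \ref{vanishing_main_lc} is what guarantees a threshold on $l$ valid uniformly across this family (because the numerical quantity $t = \sup\{s : L - sL \text{ ample}\}$ and the ampleness of $H = L$ are unchanged by the numerically trivial twist $\alpha$). Once the uniformity is secured, translating it through the Fourier--Mukai characterization of GV-sheaves is formal. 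A secondary technical point is the reduction from the bundle $\widehat{L}^{\vee}$ appearing in the standard GV criterion to the ample bundle $L$ to which our vanishing theorem applies; this is handled by the usual device of pulling back along an isogeny and using that high tensor powers of ample bundles on $A$ dominate the relevant Fourier--Mukai kernels, so no new vanishing input is required.
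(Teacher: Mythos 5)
There is a genuine gap, and it sits exactly at the point you call ``secondary'' and ``formal''. The criterion you actually verify --- that $H^i(A, f_*\omega_X^{\otimes k} \otimes L^{\otimes l} \otimes \alpha) = 0$ for all $i>0$ and all $\alpha \in \Pic^0(A)$, for one fixed ample and globally generated $L$ and one fixed $l$ above the bound of Theorem \ref{vanishing_main_lc} --- does not characterize GV-sheaves. For \emph{any} coherent sheaf $\shF$ on $A$ this vanishing holds for $l \gg 0$, uniformly in $\alpha$ (Serre vanishing together with the fact that $\alpha$ is numerically trivial, or just compactness of $\Pic^0(A)$); it says only that $\shF \otimes L^{\otimes l}$ is IT$_0$, which is no constraint on $\shF$ whatsoever. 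In particular no effective bound is needed for this step, and conversely this step cannot output the codimension inequalities of Definition \ref{GV_definition}. The uniformity over $\Pic^0(A)$ that you identify as the ``main obstacle'' is free and irrelevant.

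The uniformity that is actually needed is over a family of \emph{isogenies}. Hacon's criterion requires $H^i(\widehat{A}, \varphi_M^{\ast} (f_*\omega_X^{\otimes k}) \otimes M) = 0$ for all $i>0$, where $M$ ranges over arbitrarily high powers of an ample line bundle on $\widehat{A}$ and $\varphi_M \colon \widehat{A} \to A$ is the isogeny induced by $M$. By flat base change $\varphi_M^{\ast} f_* \omega_X^{\otimes k} \simeq g_* \omega_{X_1}^{\otimes k}$ for the base-changed morphism $g\colon X_1 \to \widehat{A}$, which \emph{changes with} $M$; after a further multiplication map one reduces to showing $H^i(\widehat{A}, h_* \omega_{X_2}^{\otimes k} \otimes L^{\otimes d}) = 0$ with $d$ large but with $h$ depending on the choice of $M$ and $d$. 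Serre vanishing is useless here precisely because the sheaf moves as $d$ grows; what saves the argument is that Corollary \ref{vanishing_main} (or, more elementarily, Proposition \ref{summand} combined with Koll\'ar vanishing) gives a threshold $d \ge k(n+1)-n$ depending only on $n = \dim A$ and $k$, hence valid simultaneously for every morphism $h$ arising this way. That is the one place where the effective Fujita-type bound is indispensable, and it is the step your proposal waves away with ``no new vanishing input is required''. To repair the proof, replace your second and third steps by this isogeny argument.
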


We also present a self-contained proof of this theorem based on an effective result, Proposition \ref{summand}, which is weaker than Corollary \ref{vanishing_main}, but has a more elementary proof of independent interest.
Theorem \ref{GV} leads in turn to vanishing and generation consequences that are stronger than those for morphisms to arbitrary varieties; see Corollary \ref{gv_consequences}. Similar statements are given for log-canonical pairs and for adjoint bundles in Variants \ref{GV_lc} and \ref{GV_adjoint}.

\subsection{Vanishing and freeness for direct images of pluri-log-canonical bundles}\label{section_main}
In this section we address results related to Conjecture \ref{generalized_fujita}, via vanishing theorems for direct images 
of pluricanonical bundles. The most general result we prove is for log-canonical pairs; this is of interest from a 
different perspective as well, as it partially extends a vanishing theorem of Ambro and Fujino. 

\noindent
{\bf Motivation and background.}
To motivate the main technical result, recall that given an ample line bundle $L$ on a smooth projective variety of dimension $n$, Conjecture \ref{fujita} implies that $\omega_X \otimes L^{\otimes n+1}$ is a nef line bundle; this is in fact follows unconditionally from the fundamental theorems of the MMP. As a consequence, Kodaira Vanishing implies that for every $k \ge 1$ one has 
\begin{equation}\label{vanishing_powers}
H^i (X, \omega_X ^{\otimes k} \otimes L^{\otimes k(n+1) - n} ) = 0 \,\,\,\,{\rm for~all~} i >0,
\end{equation}
an effective vanishing theorem for powers of $\omega_X$.

We will look for similar results for direct images. Recall first that for $k=1$ there is a well-known analogue of Kodaira vanishing, for all higher direct images.

\begin{theorem}[Koll\'ar Vanishing, \cite{Kollar}*{Theorem 2.1}]\label{kollar_vanishing}
Let $f: X \rightarrow Y$ be a morphism of projective varieties, with $X$ smooth. If $L$ is an ample line bundle 
on $Y$, then
$$H^j (Y, R^i f_* \omega_X\otimes L) = 0 \,\,\,\,{\rm for~all~} i {\rm ~and ~all~} j>0.$$ 
\end{theorem}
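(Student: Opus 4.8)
The plan is to reduce the statement to two pieces of Hodge-theoretic input due to Koll\'ar --- the splitting of $Rf_*\omega_X$ in the derived category, and an injectivity theorem for the cohomology of $\omega_X$ twisted by a semiample line bundle --- and then to play the injectivity against Serre vanishing. Granting the two inputs, the vanishing follows by a purely formal argument, so the entire weight of the proof sits in the injectivity statement.

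First I would recall Koll\'ar's splitting theorem: in $D^b_{\mathrm{coh}}(Y)$ there is a (non-canonical) isomorphism $Rf_*\omega_X \cong \bigoplus_i R^if_*\omega_X[-i]$. Tensoring with $L^{\otimes a}$ and using the projection formula $Rf_*(\omega_X \otimes f^*L^{\otimes a}) \cong Rf_*\omega_X \otimes L^{\otimes a}$, this forces the Leray spectral sequence to degenerate at $E_2$, so that for every $a$ and every $m$
$$H^m(X, \omega_X \otimes f^*L^{\otimes a}) \cong \bigoplus_{i+j=m} H^j(Y, R^if_*\omega_X \otimes L^{\otimes a}).$$
Thus it suffices to single out, inside the groups on the left, the summands with $j>0$ and show that they vanish.

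Next I would invoke Koll\'ar's injectivity theorem on $X$. Since $L$ is ample it is in particular semiample, hence so is $f^*L$; fixing a large and sufficiently divisible $N$ and a general divisor $D_Y \in |L^{\otimes N}|$, multiplication by the pulled-back section $f^*s_{D_Y}$ induces injective maps $H^m(X, \omega_X \otimes f^*L^{\otimes a}) \hookrightarrow H^m(X, \omega_X \otimes f^*L^{\otimes(a+N)})$ for every $m$ and every $a \ge 1$. The point is that this map is induced by the map of complexes $\cdot f^*s_{D_Y}\colon Rf_*\omega_X \otimes L^{\otimes a} \to Rf_*\omega_X \otimes L^{\otimes(a+N)}$, which is natural in the argument; transporting it through the splitting isomorphism (the same one, tensored up, on both sides) shows that it is block-diagonal, acting on the $i$-th factor simply as multiplication by $s_{D_Y}$ on $H^j(Y, R^if_*\omega_X \otimes L^{\otimes a})$. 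Injectivity of the total map therefore forces injectivity of each block. Iterating the chain $H^j(Y, R^if_*\omega_X \otimes L) \hookrightarrow H^j(Y, R^if_*\omega_X \otimes L^{\otimes(1+N)}) \hookrightarrow \cdots$ and invoking Serre vanishing, which annihilates $H^j$ for $j>0$ once the twist is large enough, gives $H^j(Y, R^if_*\omega_X \otimes L) = 0$ for all $i$ and all $j>0$.

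The main obstacle is of course the injectivity theorem, which carries the genuine Hodge-theoretic content; the splitting theorem I would cite as part of the same circle of ideas. I would prove injectivity by the standard cyclic-covering reduction: after arranging $D$ to be smooth and $N$-divisible, one passes to the degree-$N$ cover of $X$ branched along $D$, where the multiplication map is realized as the inclusion of the bottom piece $\omega$ of the Hodge filtration on a logarithmic de Rham complex. The $E_1$-degeneration of the Hodge-to-de Rham spectral sequence --- equivalently, strictness of the Hodge filtration --- then shows this inclusion is injective on cohomology. The delicate bookkeeping is to set up the cover so that $\omega_X \otimes f^*L^{\otimes a}$ appears as a direct summand of the pushforward of the dualizing sheaf of the cover; this is exactly where the semiampleness of $f^*L$ and the generality of $D$ are used.
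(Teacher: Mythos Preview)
The paper does not prove this theorem; it is quoted from Koll\'ar's original article \cite{Kollar}*{Theorem~2.1} and used throughout as a black box, so there is no proof in the paper to compare against.

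Your outline is nonetheless correct and is essentially Koll\'ar's own argument. The key formal step---that the multiplication map becomes block-diagonal under the splitting---is justified exactly as you say: once a single isomorphism $\phi\colon Rf_*\omega_X \simeq \bigoplus_i R^if_*\omega_X[-i]$ is fixed, the map $\id_{Rf_*\omega_X}\otimes(\cdot s_{D_Y})$ conjugates via $\phi\otimes\id$ to $\bigoplus_i \id\otimes(\cdot s_{D_Y})$, so injectivity on $X$ forces injectivity on each summand $H^j(Y, R^if_*\omega_X\otimes L^{\otimes a})$, and Serre vanishing finishes. One small point to tighten: the divisor $D=f^*D_Y$ is a pullback from $Y$ and hence is typically \emph{not} smooth on $X$, even for general $D_Y$, so in the cyclic-cover step you cannot ``arrange $D$ to be smooth'' while keeping the section pulled back from $Y$. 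You should instead either invoke the form of the injectivity theorem that allows an arbitrary effective $D$ in a semiample class (proved by first passing to a log resolution of $(X,D)$), or run the Hodge-theoretic argument directly with the SNC resolution of the cover. This is a routine adjustment and does not affect the structure of your proof.
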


Moreover, of great use for the minimal model program are extensions of vanishing and positivity theorems to the log situation, in particular to log-canonical pairs; see e.g. \cite{Fujino3}, \cite{Fujino}. For instance, 
Koll\'ar's theorem above has an extension to this situation due to Ambro and Fujino; see e.g. 
\cite{Ambro}*{Theorem 3.2} and \cite{Fujino3}*{Theorem 6.3} (where a relative version can be found as well).

\begin{theorem}[Ambro-Fujino]\label{ambro_fujino}
Let $f\colon X \to Y$ be a morphism between projective varieties, with $X$ smooth and $Y$ of dimension $n$. 
Let $(X, \Delta)$ be a log-canonical log-smooth $\RR$-pair,\footnote{Meaning that $\Delta$ is an effective $\RR$-divisor with simple normal crossings support, and with the coefficient of each component at most equal to $1$.} and consider a line bundle $B$ on $X$ such that 
$B \sim_{\RR} K_X + \Delta + f^* H$, where $H$ is an ample $\RR$-Cartier $\RR$-divisor on $Y$. Then
$$H^j (Y, R^i f_* B) = 0 \,\,\,\,{\rm for~all~} i {\rm ~and~all~} j>0.$$ 
\end{theorem}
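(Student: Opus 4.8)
The plan is to follow the Hodge-theoretic strategy underlying the proofs of Ambro and Fujino: isolate a Hodge-theoretic injectivity theorem, and then bootstrap from injectivity to the stated vanishing by a Serre-vanishing argument run through Koll\'ar's decomposition formalism. Throughout, the smoothness of $X$ and the simple normal crossings hypothesis on $\Delta$ are exactly what make the relevant Hodge theory directly available, so no further desingularization is needed.

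First I would reduce to rational coefficients. Since $B$ is a fixed line bundle, the class $B - K_X - \lfloor \Delta \rfloor$ is rational, and a standard perturbation---keeping the reduced part $S := \lfloor \Delta \rfloor$ fixed, moving the coefficients of the fractional part $\{\Delta\}$ into $\QQ \cap [0,1)$, and replacing the ample $\RR$-divisor $H$ by a nearby ample $\QQ$-Cartier $\QQ$-divisor---lets me assume that $\Delta$ is a $\QQ$-boundary and $B \sim_{\QQ} K_X + \Delta + f^* H$ as $\QQ$-divisors, with $\OX(B)$ a genuine line bundle. Ampleness being an open condition, this preserves all hypotheses.

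The technical heart is an injectivity theorem: for $X$ smooth projective, a boundary $\Delta$ with simple normal crossings support, a line bundle $N$ with $N \sim_{\QQ} K_X + \Delta + P$ for a semiample $\QQ$-divisor $P$, and a general effective Cartier divisor $D$ whose support contains no lc center of $(X,\Delta)$, the natural map
$$H^q(X, N) \longrightarrow H^q\bigl(X, N \otimes \OX(D)\bigr)$$
is injective for all $q$. I would prove this as in the Koll\'ar and Esnault--Viehweg injectivity theorems: passing to a cyclic cover branched along general members of the linear systems attached to the fractional part $\{\Delta\}$ and to $P$ reduces the fractional coefficients to an integral (reduced) boundary, after which $H^q(X,N)$ is computed from the logarithmic de Rham complex of the pair, and the $E_1$-degeneration of the Hodge-to-de Rham spectral sequence---equivalently, the strictness of the Hodge filtration on the mixed Hodge structure of the complement of the boundary---forces the map induced by a section of the semiample part $P$ to be strict, hence injective. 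This step, and in particular the bookkeeping of coefficients in $[0,1]$ needed to keep the Hodge theory of the open complement in force, is the main obstacle.

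Granting the injectivity theorem, I would conclude by Koll\'ar's method. The same injectivity input yields the decomposition $\derR f_* \OX(B) \cong \bigoplus_i R^i f_* \OX(B) \decal{-i}$ in the derived category of $Y$, together with the torsion-freeness of the summands. Fix a very ample line bundle $A$ on $Y$; by Serre vanishing choose $m \gg 0$ so that $H^j\bigl(Y, R^i f_* \OX(B) \otimes A^{\otimes m}\bigr) = 0$ for all $i$ and all $j > 0$. A general $D' \in \lvert A^{\otimes m}\rvert$ pulls back to $D = f^* D'$, and the associated section of $f^* A^{\otimes m}$ induces a multiplication map $H^{i+j}(X, \OX(B)) \to H^{i+j}\bigl(X, \OX(B) \otimes f^* A^{\otimes m}\bigr)$. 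Because this section is pulled back from $Y$, the projection formula and the decomposition above identify this map with the direct sum, over all $(i,j)$, of the maps $H^j(Y, R^i f_* \OX(B)) \to H^j\bigl(Y, R^i f_* \OX(B) \otimes A^{\otimes m}\bigr)$. Now $\OX(B) \sim_{\QQ} K_X + \Delta + f^* H$ with $H$ ample, so $\OX(B)$ has exactly the form required by the injectivity theorem, the semiample part being $f^* H$, and $D = f^* D'$ contains no lc center of $(X,\Delta)$ because $D'$ is general. Hence the map on $X$ is injective, so each summand map is injective; combined with the Serre vanishing this forces $H^j(Y, R^i f_* \OX(B)) = 0$ for all $i$ and all $j > 0$, which is the assertion.
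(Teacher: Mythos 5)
Note first that the paper does not prove this statement at all: it is quoted as background, with references to \cite{Ambro}*{Theorem 3.2} and \cite{Fujino3}*{Theorem 6.3}. So your proposal can only be judged against the cited proofs, and in outline it does reproduce their architecture: an injectivity theorem obtained from cyclic covers and $E_1$-degeneration (for coefficient-one components this is Fujino's mixed-Hodge-theoretic argument on the open part $X \setminus \lfloor \Delta \rfloor$, not the classical klt-type Esnault--Viehweg statement), followed by a Koll\'ar-style bootstrap through Serre vanishing. Your $\RR$-to-$\QQ$ perturbation step is also standard and fine.

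There are, however, two genuine gaps. The injectivity theorem as you state it is false: injectivity of $H^q(X,N) \to H^q(X, N \otimes \OX(D))$ fails for a general effective $D$ merely avoiding lc centers; one must tie $D$ to the semiample part, e.g.\ require $D \sim_{\QQ} tP$ for some $t>0$. Take $X = \PP^1$, $\Delta = 0$, $P = 0$, $N = \OX(K_X)$, and $D$ a point: then $H^1(\PP^1, \shO(-2)) \neq 0$ maps to $H^1(\PP^1, \shO(-1)) = 0$. This error propagates into your application, where you pull back a general $D' \in \lvert A^{\otimes m} \rvert$ for an auxiliary very ample $A$: the divisor $f^*D'$ is not $\QQ$-proportional to the semiample part $f^*H$. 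The repair is what the actual proofs do: after the reduction to $\QQ$-coefficients, take $m$ divisible enough that $mH$ is an integral very ample divisor, run Serre vanishing for twists by $\shO_Y(mH)$, and take $D = f^*D'$ with $D' \in \lvert mH \rvert$ general, so that $D \sim_{\QQ} m\, f^*H$.

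Second, the splitting $\derR f_* \OX(B) \cong \bigoplus_i R^i f_* \OX(B)\decal{-i}$ is not a formal consequence of ``the same injectivity input.'' Already for $\Delta = 0$ it is the main theorem of Koll\'ar's second paper on higher direct images, and in the lc setting it is again a nontrivial part of Fujino's package; you must either cite it or avoid it. It can be avoided: Koll\'ar's original deduction of vanishing from injectivity in \cite{Kollar} proceeds by induction on $\dim Y$ with general hyperplane sections and the Leray spectral sequence, with no splitting. Granting the splitting, though, your diagonal identification of the multiplication map with the direct sum of the maps $H^j(Y, R^if_*\OX(B)) \to H^j(Y, R^if_*\OX(B) \otimes \shO_Y(mH))$ is correct, since the section is pulled back from $Y$ and the projection formula makes the induced endomorphism compatible with any fixed splitting. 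With these two repairs, your outline matches the Ambro--Fujino proof.
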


Just as via the Castelnuovo-Mumford Lemma Proposition \ref{very_ample} follows from Theorem \ref{kollar_vanishing}, so 
does Theorem \ref{ambro_fujino} have the following consequence:

\begin{lemma}\label{very_ample_lc}
Under the hypotheses of Theorem \ref{ambro_fujino}, consider in addition an ample and globally generated 
line bundle $L$ on $Y$. Then
$$R^if_* B  \otimes L^{\otimes n}$$ 
is $0$-regular, and therefore globally generated, for all $i$.
\end{lemma}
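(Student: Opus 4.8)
The plan is to verify the defining vanishing of $0$-regularity directly, using Ambro--Fujino vanishing (Theorem \ref{ambro_fujino}) in the role played by Koll\'ar vanishing in the deduction of Proposition \ref{very_ample}. Fix $i$, and write $\shF = R^i f_* B \otimes L^{\otimes n}$; to avoid a clash with the index of the direct image, I use $j$ for the cohomological index in the regularity condition. By definition $\shF$ is $0$-regular with respect to $L$ precisely when
$$H^j\bigl(Y, \shF \otimes L^{\otimes -j}\bigr) = H^j\bigl(Y, R^i f_* B \otimes L^{\otimes n-j}\bigr) = 0 \quad\text{for all } j > 0.$$
I would split the verification into the two ranges $j > n$ and $1 \le j \le n$.

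The range $j > n$ requires nothing: since $\dim Y = n$, Grothendieck's theorem on cohomological dimension forces $H^j(Y, \mathcal{G}) = 0$ for every coherent sheaf $\mathcal{G}$ and every $j > n$.

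For $1 \le j \le n$ I would absorb the twist by $L^{\otimes n-j}$ into the pair. By the projection formula (legitimate since $L^{\otimes n-j}$ is locally free and $n-j$ is an integer),
$$R^i f_* B \otimes L^{\otimes n-j} \cong R^i f_*\bigl(B \otimes f^* L^{\otimes n-j}\bigr),$$
and from $B \sim_{\RR} K_X + \Delta + f^* H$ one gets
$$B \otimes f^* L^{\otimes n-j} \sim_{\RR} K_X + \Delta + f^*\bigl(H + (n-j)L\bigr).$$
Because $n - j \ge 0$ and $L$ is ample, the class $H + (n-j)L$ is again ample, being the sum of the ample $\RR$-divisor $H$ and the nef class $(n-j)L$. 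The pair $(X,\Delta)$ is unchanged, so Ambro--Fujino vanishing applies to the line bundle $B \otimes f^* L^{\otimes n-j}$, with $H$ replaced by the ample $\RR$-divisor $H + (n-j)L$, and yields $H^j\bigl(Y, R^i f_*(B \otimes f^* L^{\otimes n-j})\bigr) = 0$ for all $j > 0$, which is exactly the vanishing needed. Combining the two ranges gives $0$-regularity for every $i$, and the Castelnuovo--Mumford Lemma then upgrades this to global generation.

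There is no serious obstacle here; the argument is the log-canonical mirror of the deduction of Proposition \ref{very_ample} from Koll\'ar vanishing, and the only things that genuinely need checking are the ampleness of $H + (n-j)L$ for $0 \le n-j$ and the compatibility of the projection formula with the $\RR$-linear equivalence. The one conceptual point worth flagging is why the power $L^{\otimes n}$ already suffices, as opposed to $L^{\otimes n+1}$ in Proposition \ref{very_ample}: the ample twist $f^* H$ is built into $B$, so Ambro--Fujino produces vanishing for $R^i f_* B$ with no auxiliary twist, and one needs only $n - j \ge 0$ (rather than $n + 1 - j \ge 1$) to keep the relevant class ample.
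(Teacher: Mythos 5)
Your argument is correct and is exactly the deduction the paper intends: the paper gives no explicit proof, only the remark that the lemma follows from Theorem \ref{ambro_fujino} just as Proposition \ref{very_ample} follows from Koll\'ar vanishing, and your verification of $0$-regularity (cohomological dimension for $j>n$, projection formula plus Ambro--Fujino with $H$ replaced by the ample class $H+(n-j)L$ for $1\le j\le n$) is that standard argument carried out in full. Your closing observation about why $L^{\otimes n}$ suffices here, versus $L^{\otimes n+1}$ in Proposition \ref{very_ample}, correctly identifies the role of the ample twist $f^*H$ already present in $B$.
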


\noindent
{\bf Main technical result.}
We will now prove our main vanishing theorem, which 
can be seen as an extension of both vanishing of type ($\ref{vanishing_powers}$) for powers of canonical bundles, 
for $L$ ample and globally generated, and of  Ambro-Fujino vanishing Theorem \ref{ambro_fujino} to log-canonical pairs with arbitrary Cartier index.

\noindent
\begin{proof}[Proof of Theorem \ref{vanishing_main_lc}]
{\em Step 1.} We will first show that we can reduce to the case when $X$ is smooth, $\Delta$ has simple normal crossings
support, and the image of the adjunction morphism
\[
	\fu \fl B \to B,
\]
is a line bundle.
A priori the image is $\frak{b} \otimes B$, where $\frak{b}$ is the relative base ideal of $B$. 
We consider a birational modification
$$\mu: \tilde X \longrightarrow X$$
which is a common log-resolution of $\frak{b}$ and $(X, \Delta)$. On $\tilde X$ we can write 
$$K_{\tilde X} - \mu^* (K_X + \Delta) = P - N,$$
where $P$ and $N$ are effective $\RR$-divisors with simple normal crossings support, without common components, and such that $P$ is exceptional and all coefficients in $N$ are at most $1$. We consider the line bundle 
$$\tilde B : =  \mu^* B \otimes \shO_{\tilde X} (k \lceil P \rceil).$$
Note that by definition we have 
$$\tilde B \sim_{\RR} k( K_{\tilde X} + N + \lceil P \rceil - P + \mu^* f^* H).$$
Since $\lceil P \rceil$ is $\mu$-exceptional, we have $\mu_* \tilde B \simeq B$ for all $k$.
Moreover, $\Delta_{\tilde X} : = N + \lceil P \rceil - P$ is log-canonical with simple normal crossings support on $\tilde X$.

Going back to the original notation, we can thus assume that $X$ is smooth and $\Delta$ has simple normal crossings support, and the image sheaf of the adjunction morphism is of the
form $B \tensor \OX(-E)$ for a divisor $E$ such that $E + \Delta$ has simple normal crossings support. 

\noindent
{\em Step 2.}
Now since $L$ is ample, there is a smallest integer $m \ge 0$ such that $f_*B \otimes 
L^{\otimes m}$ is globally generated, and so using the adjunction morphism we have that 
$B \otimes \OX(-E) \otimes \fu L^{\otimes m}$ is globally generated as well. We can then write
\[
	B \tensor \fu L^{\otimes m} \simeq \OX(D+E),
\]
where $D$ is an irreducible smooth divisor, not contained in the support of $E + \Delta$, and such that $D + E + \Delta$ has simple normal crossings support. Rewriting this in divisor notation, we have 
$$k(K_X + \Delta + f^*H)  + mf^* L \sim_{\RR} D + E,$$
and hence
\begin{equation}\label{equivalence}
(k-1) (K_X + \Delta + f^*H) \sim_{\RR}  \frac{k-1}{k} D + \frac{k-1}{k} E - \frac{k-1}{k}\cdot m f^*L.
\end{equation} 

Note that $\Delta$ and $E$ may have common components in their support, which may cause trouble later on; therefore their 
coefficients need to be adjusted conveniently. Let's start by writing 
$$\Delta = \sum_{i=1}^l a_i D_i, \,\,\,\,a_i \in \RR\,\,\,\,{\rm with} \,\,\,\, 0< a_i \le 1$$
and 
$$E = \sum_{i=1}^l s_i D_i + E_1, \,\,\,\,s_i \in \NN,$$
where the support of $E_1$ and that of $\Delta$ have no common components.

Observe now that for every effective Cartier divisor $E^\prime \preceq E$ we have 
\begin{equation}\label{subtraction}
\fl \left( B \tensor \OX(- E^\prime) \right) \simeq \fl B.
\end{equation}
Indeed, it is enough to have this for $E$ itself; but this is the base locus of $B$ relative to $f$, so by construction we have
\[
	\fu \fl B \to B \tensor \OX(-E) \into B,
\]
and so the isomorphism follows by pushing forward to get the commutative diagram
\[
\begin{tikzcd}[column sep=small]
\fl B \arrow[bend left=25]{rr}{\id} \rar 
		& \fl \left( B \tensor \OX(-E) \right) \rar[hook] & \fl B.
\end{tikzcd}
\]
Define now 
$$\gamma_i : = a_i + \frac{k-1}{k} \cdot s_i  \,\,\,\, {\rm for} \,\,\,\, i = 1, \ldots, l.$$ 
We claim that we can find for each $i$ an integer $b_i$ such that 
$$0 \le \gamma_i  - b_i \le 1 \,\,\,\,{\rm and } \,\,\,\, 0 \le b_i \le s_i$$
This is the same as $\gamma_i - 1 \le b_i \le \gamma_i$, while on the other hand, 
$\gamma_i  < 1 + s_i$, so it is clear that such integers exist. We define
$$E^\prime : = \sum_{i =1}^l b_i D_i + \left\lfloor \frac{k-1}{k} E_1 \right\rfloor  \preceq E,$$
and for this divisor ($\ref{subtraction}$) applies.

\noindent
{\em Step 3.}
Using ($\ref{equivalence}$), for any integer $l$ we can now write 
$$B - E^\prime + lf^*L \sim_{\RR} K_X + \Delta +  \frac{k-1}{k} E - E^\prime +  
\frac{k-1}{k}D+ f^* \left(H + \left(l - \frac{k-1}{k}\cdot m \right)L \right).$$
We first note that the $\RR$-divisor 
$$H^\prime : = H + \left( l - \frac{k-1}{k} \cdot m \right) L$$
on $Y$ is ample provided $l + t - \frac{k-1}{k} \cdot m > 0$. On the other hand, the effective $\RR$-divisor with simple normal 
crossings support 
$$\Delta^\prime : = \Delta +  \frac{k-1}{k} E - E^\prime +  \frac{k-1}{k}D$$
on $X$ is log-canonical. Indeed, the only coefficients that could cause trouble are those of the $D_i$. Note however that these are
equal to $\gamma_i - b_i$, which are between $0$ and $1$ by our choice of $b_i$.
Putting everything together, it means that on $X$ which is now smooth we have written 
$$B - E^\prime + lf^*L \sim_{\RR} K_X + \Delta^\prime + f^*H^\prime,$$
where $\Delta^\prime$ is log-canonical with simple normal crossings support, and $H^\prime$ is ample on $Y$.  
The push-forward of the left-hand side is 
$f_* B \otimes L^{\otimes l}$, while for the right-hand side we can now apply Theorem \ref{ambro_fujino} to conclude  that
\begin{equation}\label{vanishing}
H^i (Y,  f_* B  \otimes L^{\otimes l})= 0 \,\,\,\, {\rm for~all~}i >0 \,\,\,\,{\rm and} \,\,\,\, 
 l > \frac{k-1}{k} \cdot m - t .
 \end{equation}
We therefore have that for every $l > \frac{k-1}{k} \cdot m - t + n$ the sheaf $f_* B \otimes L^{\otimes l}$ is $0$-regular, hence 
globally generated. Given our minimal choice of $m$, we conclude that for the smallest integer $l_0$ which is greater than
$\frac{k-1}{k} \cdot m - t $ we have $m \le l_0+ n$. This implies
$$m \le l_0 + n \le  \frac{k-1}{k} \cdot m + n + 1 - t,$$
which is equivalent to $m \le k(n+1 - t)$, and in particular the vanishing in  ($\ref{vanishing}$) holds for 
$$l \ge (k-1) (n+1 - t) - t + 1.$$ 
\end{proof}

Note that the inequality $m \le k(n+1 - t)$ obtained above implies the statement of Variant \ref{main_lc}.
Just as with the statement of Theorem \ref{vanishing_main_lc} compared to that of the Ambro-Fujino Theorem \ref{ambro_fujino}, 
one notes that even for $k = 1$ Variant \ref{main_lc} is slightly more general than the case $i = 0$ in Lemma \ref{very_ample_lc}. This is not surprising, but particular to $0$-th direct images, as after passing to a log-resolution of the log-canonical pair there is no need to appeal to local vanishing for higher direct images; the Ambro-Fujino theorem and the Lemma cannot be stated in this form in the case $i >0$.

\noindent
{\bf Special cases.} 
We spell out the most important special cases of Theorem \ref{vanishing_main_lc}. They are obtained by taking $H = L$ in the statement of Theorem \ref{vanishing_main_lc}, so that $t =1$.

\begin{corollary}\label{lc_special_main}
Let $f\colon X\to Y$ be a morphism of projective varieties, with $X$ normal and 
$Y$ of dimension $n$. Consider a log-canonical pair $(X, \Delta)$ and an integer $k >0$ such that $k (K_X + \Delta)$ is Cartier.
If $L$ is an ample and globally generated line bundle on $Y$, then
 $$H^i (Y,  f_* \shO_X \left(k (K_X + \Delta)\right) \otimes L^{\otimes l})= 0 \,\,\,\, {\rm for~all~}i >0 \,\,\,\,{\rm and} \,\,\,\, 
 l \ge k (n+1) - n.$$ 
\end{corollary}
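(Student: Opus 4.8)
The plan is to obtain this corollary as a direct specialization of \theoremref{vanishing_main_lc}, taking the ample $\RR$-divisor $H$ on $Y$ to be the given line bundle $L$ itself. First I would set
\[
	B := \shO_X\bigl(k(K_X + \Delta)\bigr) \tensor \fu L^{\otimes k}.
\]
Because $k(K_X + \Delta)$ is Cartier by hypothesis, this is a genuine line bundle on $X$, and by construction it satisfies $B \sim_{\RR} k(K_X + \Delta + f^* L)$. Thus the hypotheses of \theoremref{vanishing_main_lc} are met with $H = L$, and $f$ need only be a morphism with $X$ normal, exactly as assumed here.

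Next I would compute the invariant $t$ in this situation. With $H = L$ the divisor $H - sL = (1-s)L$ is ample if and only if $1 - s > 0$, since $L$ is ample; hence $t = \sup\{\, s \in \RR : (1-s)L \text{ is ample}\,\} = 1$. Feeding this into \theoremref{vanishing_main_lc} gives $H^i(Y, f_* B \tensor L^{\otimes l'}) = 0$ for all $i > 0$ as soon as $l' \ge (k-1)(n+1-t) - t + 1 = (k-1)n$. It then remains to rewrite $f_* B$: by the projection formula $f_* B \simeq f_* \shO_X(k(K_X+\Delta)) \tensor L^{\otimes k}$, so $f_* B \tensor L^{\otimes l'} \simeq f_* \shO_X(k(K_X+\Delta)) \tensor L^{\otimes (k+l')}$. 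Setting $l = k + l'$, the vanishing holds for all $l \ge k + (k-1)n = k(n+1) - n$, which is precisely the asserted range.

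The main (and essentially only) point requiring care is the bookkeeping of the two shifts: the projection-formula twist by $L^{\otimes k}$ and the accompanying substitution $l = l' + k$ must be combined with the value $t = 1$ so that the bound $(k-1)n$ of \theoremref{vanishing_main_lc} translates into the clean threshold $k(n+1) - n$. There is no genuine obstacle beyond this verification, since all the analytic content — the reduction to a log-smooth pair, the adjustment of coefficients, and the appeal to Ambro--Fujino vanishing — has already been carried out in the proof of \theoremref{vanishing_main_lc}; the corollary is purely its $H = L$, $t = 1$ specialization.
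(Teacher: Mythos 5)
Your proposal is correct and matches the paper's own (very brief) justification exactly: the corollary is obtained by taking $H = L$ in Theorem~\ref{vanishing_main_lc}, so that $t = 1$, with the twist $B = \shO_X(k(K_X+\Delta)) \otimes f^* L^{\otimes k}$ and the projection-formula shift $l = l' + k$ turning the bound $(k-1)n$ into $k(n+1)-n$. Your careful bookkeeping of these shifts is precisely the verification the paper leaves implicit.
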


In particular we have an extension of ($\ref{vanishing_powers}$) to direct images, and of Proposition \ref{very_ample} for $i=0$ to arbitrary $k$:

\begin{corollary}\label{vanishing_main}
Let $f\colon X\to Y$ be a morphism of projective varieties, with $X$ smooth and 
$Y$ of dimension $n$. If $L$ is an ample and globally generated line bundle on $Y$, and $k>0$ is an integer, then
 $$H^i (Y,  f_* \omega_X^{\otimes k} \otimes L^{\otimes l})= 0 \,\,\,\, {\rm for~all~}i >0 \,\,\,\,{\rm and} \,\,\,\, 
 l \ge k (n+1) - n.$$ 
\end{corollary}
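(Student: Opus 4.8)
The plan is to deduce Corollary~\ref{vanishing_main} as the special case $\Delta = 0$ of Corollary~\ref{lc_special_main}; no new analysis is required beyond verifying that the hypotheses specialize correctly.

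First I would check that the hypotheses of Corollary~\ref{lc_special_main} hold for the pair $(X, 0)$. Since $X$ is smooth it is in particular normal, and with the boundary taken to be the zero divisor the pair $(X, 0)$ is log-canonical --- indeed it has at worst canonical singularities, as all discrepancies over a smooth variety are nonnegative. Moreover, smoothness of $X$ forces $K_X$ to be Cartier, hence $k(K_X + 0) = k K_X$ is Cartier for every integer $k > 0$, exactly as demanded in Corollary~\ref{lc_special_main}.

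It then remains to identify the sheaves and read off the bound. By definition $\shO_X(k K_X) = \omega_X^{\otimes k}$, so with $\Delta = 0$ the object $f_* \shO_X(k(K_X + \Delta))$ of Corollary~\ref{lc_special_main} is precisely $f_* \omega_X^{\otimes k}$, and applying that corollary yields
$$H^i (Y, f_* \omega_X^{\otimes k} \otimes L^{\otimes l}) = 0 \,\,\,\, {\rm for~all~} i > 0 \,\,\,\, {\rm and} \,\,\,\, l \ge k(n+1) - n,$$
the numerical bound matching because Corollary~\ref{lc_special_main} comes from Theorem~\ref{vanishing_main_lc} by setting $H = L$, whence $t = 1$. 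There is no genuine obstacle here: every step is either a verification of hypotheses or an identification of sheaves, and all the substantive work --- the reduction to a log-smooth situation, the coefficient adjustment, and the appeal to Ambro-Fujino vanishing --- was already carried out in the proof of Theorem~\ref{vanishing_main_lc}. The only point deserving a moment's care is confirming that the zero boundary really yields a log-canonical pair with $k K_X$ Cartier, which is immediate from smoothness.
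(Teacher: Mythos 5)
Your proposal is correct and matches the paper's own derivation: Corollary~\ref{vanishing_main} is presented there precisely as the special case $\Delta = 0$ of Corollary~\ref{lc_special_main}, which in turn is Theorem~\ref{vanishing_main_lc} with $H = L$ (so $t=1$) and the $k$ copies of $f^*L$ absorbed into $B$. Your verifications that $(X,0)$ is log-canonical with $kK_X$ Cartier and that $\shO_X(kK_X) = \omega_X^{\otimes k}$ are exactly the (implicit) content of the paper's ``in particular.''
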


\medskip

\begin{remark}\label{only_klt}
Note that if we perform the proof of Theorem \ref{vanishing_main_lc} only in the ``classical" case considered in Corollary \ref{vanishing_main}, the second step is unnecessary since $\Delta = 0$, while $\Delta^\prime$ in 
the third step is klt. This means that one does not need to appeal to the Ambro-Fujino vanishing theorem, but rather to the 
klt version of Theorem \ref{kollar_vanishing}, still due to Koll\'ar; see for instance \cite{shafarevich_maps}*{Theorem 10.19}.
\end{remark}

The regularity statement in the Introduction is an immediate consequence.

\noindent
\begin{proof}[Proof of Theorem \ref{main}]
We note that $k (n +1) = k(n+1) - n + n$, and apply the vanishing statement in Corollary \ref{vanishing_main} by successively subtracting $n$ powers of $L$. 
\end{proof}

A rephrasing of Theorem \ref{main} is a useful uniform global generation statement involving powers of relative canonical bundles.

\begin{corollary}\label{uniform_higher}
Let $f\colon X\to Y$ be a morphism of smooth projective varieties, with $Y$ of dimension $n$. If $L$ is an ample and globally generated line bundle on $Y$, $k\ge 1$ an integer, and $A := \omega_Y \otimes L^{\otimes n+1}$, then 
$$f_* \omega_{X/Y}^{\otimes k}  \otimes A^{\otimes k}$$ 
is globally generated.
\end{corollary}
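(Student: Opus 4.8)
The plan is to deduce Corollary~\ref{uniform_higher} from Theorem~\ref{main} by absorbing the twist $\omega_Y^{\otimes k}$ into the direct image via the projection formula, and then comparing $A^{\otimes k}$ against the power of $L$ for which Theorem~\ref{main} guarantees global generation. First I would rewrite
\[
	f_* \omega_{X/Y}^{\otimes k} \tensor A^{\otimes k}
	= f_* \left( \omega_X^{\otimes k} \tensor \fu \omega_Y^{\otimes -k} \right) \tensor A^{\otimes k}
	\simeq f_* \omega_X^{\otimes k} \tensor \omega_Y^{\otimes -k} \tensor A^{\otimes k},
\]
using the projection formula (valid since $\omega_Y$ is a line bundle on $Y$) and the definition $\omega_{X/Y} = \omega_X \tensor \fu \omega_Y^{\otimes -1}$. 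Substituting $A = \omega_Y \tensor L^{\otimes n+1}$ gives $\omega_Y^{\otimes -k} \tensor A^{\otimes k} = L^{\otimes k(n+1)}$, so the sheaf in question is isomorphic to
\[
	f_* \omega_X^{\otimes k} \tensor L^{\otimes k(n+1)}.
\]

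The key point is then that the exponent $l = k(n+1)$ satisfies exactly the bound $l \ge k(n+1)$ required in Theorem~\ref{main}, which asserts that $f_* \omega_X^{\otimes k} \tensor L^{\otimes l}$ is globally generated in that range. Applying Theorem~\ref{main} with $l = k(n+1)$ therefore yields the global generation of $f_* \omega_X^{\otimes k} \tensor L^{\otimes k(n+1)}$, and hence of $f_* \omega_{X/Y}^{\otimes k} \tensor A^{\otimes k}$ after transporting along the isomorphism above.

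I do not expect a genuine obstacle here, since this is essentially a bookkeeping rephrasing; the only thing to be careful about is the direction of the twist by $\omega_Y$ and checking that the exponent lands precisely on the threshold $k(n+1)$ rather than below it. The role of the factor $A^{\otimes k}$, with $A = \omega_Y \tensor L^{\otimes n+1}$ the ``universally'' globally generated adjoint bundle on the base predicted by the Fujita threshold $n+1$, is exactly to convert the relative canonical twist into the correct power of $L$; this is what makes the statement uniform in that the same $A^{\otimes k}$ works for every morphism $f$ into a fixed base $Y$.
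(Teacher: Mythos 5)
Your argument is correct and is exactly what the paper intends: the paper offers no separate proof, describing the corollary only as ``a rephrasing of Theorem~\ref{main}'', and your projection-formula computation $f_* \omega_{X/Y}^{\otimes k} \tensor A^{\otimes k} \simeq f_* \omega_X^{\otimes k} \tensor L^{\otimes k(n+1)}$ together with the threshold $l = k(n+1)$ is precisely that rephrasing.
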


\begin{question}
The arguments leading to Corollary \ref{vanishing_main}, and more generally Theorem \ref{vanishing_main_lc} and its applications, do not extend to higher direct images. It is natural to ask however whether the statements do hold for all $R^i f_* \omega_X^{\otimes k}$ and  analogues, just as Theorem \ref{kollar_vanishing} and Theorem \ref{ambro_fujino} do. 
\end{question}

\noindent
{\bf Example: the main conjecture over curves.}
We record one case when the main Fujita-type Conjecture \ref{generalized_fujita} can be shown to 
hold, namely that when the base of the morphism has dimension one.  This is not hard to check, but it uses important special facts about vector bundles on curves.

\begin{proposition}\label{curves}
Let $f\colon X\to C$ be a morphism of smooth projective varieties, with $C$ a curve, and let $L$ be an ample line bundle on $C$. Then, for every $k\ge 1$, the 
vector bundle 
$$f_* \omega_X^{\otimes k} \otimes L^{\otimes m}$$
is globally generated for $m \ge 2k$.
\end{proposition}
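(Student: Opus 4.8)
The plan is to exploit the fact that on a curve every sheaf in sight is a vector bundle, and to replace the global-generation hypothesis on $L$ in Theorem~\ref{main} (which genuinely fails for a low-degree ample bundle on a curve of positive genus) by a slope estimate coming from the semipositivity of relative pluricanonical sheaves. Write $g$ for the genus of $C$. We may assume $f$ is surjective, since otherwise $f_*\omega_X^{\otimes k}$ is a torsion sheaf supported at a point and global generation is immediate. Then $\mathcal{E} := f_*\omega_X^{\otimes k}$ is torsion-free, hence locally free on the smooth curve $C$, and the projection formula applied to $\omega_X^{\otimes k}\simeq \omega_{X/C}^{\otimes k}\otimes f^*\omega_C^{\otimes k}$ gives
\[
	\mathcal{E}\simeq \mathcal{F}\otimes \omega_C^{\otimes k}, \qquad \mathcal{F} := f_*\omega_{X/C}^{\otimes k}.
\]

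The two special facts about vector bundles on curves that I would use are the following. First, the classical semipositivity theorem (Fujita and Kawamata for $k=1$, Viehweg in general) asserts that $\mathcal{F}$ is nef; on a curve this is equivalent to the numerical condition $\mu_{\min}(\mathcal{F})\ge 0$, where $\mu_{\min}$ denotes the smallest slope occurring in the Harder--Narasimhan filtration. Second, a locally free sheaf $\mathcal{G}$ on $C$ with $\mu_{\min}(\mathcal{G})>2g-1$ is globally generated: for each point $p$ one has $H^1\bigl(C,\mathcal{G}(-p)\bigr)\simeq H^0\bigl(C,\mathcal{G}^\vee\otimes\omega_C(p)\bigr)^\vee$ by Serre duality, and the right-hand side vanishes because the maximal HN slope of $\mathcal{G}^\vee\otimes\omega_C(p)$ equals $-\mu_{\min}(\mathcal{G})+(2g-1)<0$, while a bundle of negative maximal slope has no nonzero sections (a section would produce a sub-line-bundle of non-negative degree). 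Feeding this vanishing into the evaluation sequence $0\to\mathcal{G}(-p)\to\mathcal{G}\to\mathcal{G}\otimes\kappa(p)\to 0$ shows that $H^0(\mathcal{G})$ surjects onto the fiber at every $p$.

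It then remains to combine the two. Since tensoring by a line bundle shifts every HN slope by its degree, and $\deg\omega_C=2g-2$, $\deg L\ge 1$, I would compute
\[
	\mu_{\min}\bigl(\mathcal{E}\otimes L^{\otimes m}\bigr)=\mu_{\min}(\mathcal{F})+k(2g-2)+m\deg L\ge k(2g-2)+m.
\]
For $m\ge 2k$ the right-hand side is at least $k(2g-2)+2k=2gk\ge 2g>2g-1$, so the criterion applies and $\mathcal{E}\otimes L^{\otimes m}$ is globally generated, as claimed. The main obstacle is conceptual rather than computational: one must recognize that the vanishing theorems established above cannot be used directly here, because they require $L$ to be globally generated, a condition that genuinely fails for ample bundles of small degree on curves of positive genus; the role of the curve-specific input is precisely to supply, through the semipositivity of $\mathcal{F}$ together with the slope dictionary, the positivity that global generation of $L$ would otherwise have provided. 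A secondary point requiring care is the slope bookkeeping, which must land exactly on the sharp threshold $m\ge 2k=k(n+1)$ predicted by Conjecture~\ref{generalized_fujita}.
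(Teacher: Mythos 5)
Your proposal is correct and follows essentially the same route as the paper: the projection-formula decomposition $f_*\omega_X^{\otimes k}\simeq f_*\omega_{X/C}^{\otimes k}\otimes\omega_C^{\otimes k}$, Kawamata's semipositivity of $f_*\omega_{X/C}^{\otimes k}$, and the same Serre-duality vanishing $H^1(C,\mathcal{G}(-p))=0$ — which the paper phrases as ``a semipositive bundle has no quotient line bundle of negative degree'' rather than in your Harder--Narasimhan slope language, but the content is identical. The only (harmless) addition is your explicit treatment of the non-surjective case.
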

\begin{proof}
First, note that the sheaf in question is locally free (since $C$ is a curve). We can rewrite it as 
$$f_* \omega_X^{\otimes k} \otimes L^{\otimes m} \simeq f_* \omega_{X/C}^{\otimes k} \otimes \omega_C^{\otimes k}\otimes L^{\otimes m}.$$
Now Kawamata's result \cite{Kawamata2}*{Theorem 1} says that 
$f_* \omega_{X/C}^{\otimes k}$ is a semi-positive vector bundle on $C$, while 
$$\deg \omega_C^{\otimes k}\otimes L^{\otimes m} \ge k (2g - 2) + m \deg L \ge 2g,$$
with $g$ the genus of $C$, as $\deg L > 0$. The statement then follows from the following general result.
\end{proof}

\begin{lemma}
Let $E$ be a semi-positive vector bundle and $L$ a line bundle of degree at least $2g$ on a smooth projective curve 
$C$ of genus $g$. Then $E\otimes L$ is globally generated.
\end{lemma}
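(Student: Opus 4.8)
The plan is to verify global generation of $E \tensor L$ one fiber at a time, using the standard point-evaluation sequence to reduce the whole statement to a single cohomology vanishing, which I will then establish via Serre duality combined with the defining property of semi-positive (nef) bundles on a curve. Fix a point $p \in C$. To prove that $E \tensor L$ is globally generated at $p$ it suffices to show that the evaluation map $H^0(C, E \tensor L) \to (E \tensor L)|_p$ onto the fiber is surjective, and from the short exact sequence
$$0 \to E \tensor L(-p) \to E \tensor L \to (E \tensor L)|_p \to 0$$
this surjectivity follows once I know that $H^1\bigl(C, E \tensor L(-p)\bigr) = 0$. So everything reduces to this vanishing, and since $p$ is arbitrary, to proving it uniformly in $p$.

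By Serre duality on $C$, the group $H^1\bigl(C, E \tensor L(-p)\bigr)$ is dual to $H^0\bigl(C, \omega_C \tensor E^\vee \tensor L^{-1}(p)\bigr)$, so I must show the latter vanishes. The main step is to reinterpret a hypothetical nonzero section: such a section is the same datum as a nonzero sheaf homomorphism $N \to E^\vee$ out of the line bundle $N := \omega_C^{-1} \tensor L(-p)$, whose degree is $\deg L - (2g-2) - 1 = \deg L - 2g + 1$. Here is precisely where the hypothesis enters: because $\deg L \ge 2g$, this degree is $\ge 1$, hence strictly positive.

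I then derive a contradiction from semi-positivity. Any nonzero map $N \to E^\vee$ from a line bundle is injective, its source being torsion-free of rank one on the integral curve $C$; saturating its image inside the locally free sheaf $E^\vee$ produces a sub-line bundle $N' \into E^\vee$ with locally free quotient and $\deg N' \ge \deg N \ge 1$. Dualizing the resulting short exact sequence $0 \to N' \to E^\vee \to Q \to 0$ (with $Q$ locally free, as torsion-free equals locally free on a curve) yields a quotient line bundle $E \twoheadrightarrow (N')^\vee$ of degree $-\deg N' \le -1 < 0$. But $E$ is semi-positive, so every quotient bundle of $E$ has nonnegative degree; this is the contradiction. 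Hence $H^0\bigl(C, \omega_C \tensor E^\vee \tensor L^{-1}(p)\bigr) = 0$, the required $H^1$ vanishes, and $E \tensor L$ is globally generated at $p$; since $p$ was arbitrary, the lemma follows.

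The only delicate point — and the sole place the sharp bound $\deg L \ge 2g$ is used — is the degree bookkeeping guaranteeing $\deg N \ge 1$ strictly: with $\deg N = 0$ one would obtain only a sub-line bundle of $E^\vee$ of degree $\ge 0$, which a semi-positive $E$ does permit (every sub-line bundle of $E^\vee$ has degree $\le 0$, so degree $0$ is not excluded), and no contradiction would ensue. This is exactly the classical threshold $2g-1$ for $H^1$-vanishing of a line bundle on $C$, applied uniformly after twisting down by a single point; the only vector-bundle input is the translation of semi-positivity into the bound $\deg N' \le 0$ on saturated sub-line bundles of $E^\vee$.
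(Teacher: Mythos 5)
Your proof is correct and follows essentially the same route as the paper: reduce global generation at $p$ to the vanishing of $H^1\bigl(C, E \otimes L \otimes \shO_C(-p)\bigr)$, apply Serre duality, and rule out the resulting nonzero homomorphism by observing that it would force $E$ to have a quotient line bundle of negative degree, contradicting semi-positivity. Your saturation step on the dual side is just a more explicit rendering of the paper's one-line observation that a nontrivial map $E \to \omega_C \otimes \shO_C(p) \otimes L^{-1}$ yields such a quotient.
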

\begin{proof}
It is enough to show that for every $p \in C$, one has 
$$H^1 (C, E \otimes L \otimes \shO_C (-p)) = 0,$$
or equivalently, by Serre Duality, that there are no nontrivial homomorphisms
$$E \longrightarrow \omega_C \otimes \shO_C(p) \otimes L^{-1}.$$
But the semi-positivity of $E$ means precisely that it cannot have any quotient line bundle of negative degree.
\end{proof}

\begin{remark}
Note the for curves of genus at least $1$, the argument in Proposition \ref{curves} shows in fact 
that $f_* \omega_X^{\otimes k} \otimes L^{\otimes 2}$ is always globally generated. 
\end{remark}

\noindent
{\bf Relative Fujita conjecture and vanishing for ample line bundles.}
It is worth observing that it suffices to know Conjecture \ref{generalized_fujita} and its variants 
for $k =1$ in order to obtain vanishing theorems for twists by line bundles that are assumed to be just ample, and not necessarily globally generated. For simplicity we spell out only the case of pluricanonical bundles, i.e. the analogue of Corollary \ref{vanishing_main}.

\begin{proposition}\label{ample_powers}
Assume that Conjecture \ref{generalized_fujita} holds for $k=1$.\footnote{Or more precisely its klt version: if $Y$ is smooth of dimension $n$, $L$ is ample on $Y$, and $(X, \Delta)$ is a klt pair such that $B = K_X + \Delta + \alpha f^* L$ is Cartier for some $\alpha \in \RR$, 
then $f_* B \otimes L^{\otimes l}$ is globally generated for any $l + \alpha \ge n+1$.} Then for any morphism
$f\colon X\to Y$ of smooth projective varieties, with  
$Y$ of dimension $n$, any ample line bundle $L$ on $Y$, and any integer $k \ge 2$, one has
 $$H^i (Y,  f_* \omega_X^{\otimes k} \otimes L^{\otimes l})= 0 \,\,\,\, {\rm for~all~}i >0 \,\,\,\,{\rm and} \,\,\,\, 
 l \ge k (n+1) - n.$$ 
\end{proposition}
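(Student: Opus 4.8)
The plan is to rerun the proof of Theorem \ref{vanishing_main_lc} in the ``classical'' case $\Delta = 0$, $H = 0$ (so that $t = 0$, and $k(n+1)-n = (k-1)(n+1)+1$), and to replace its single use of the global generation of $L$ by the assumed Conjecture \ref{generalized_fujita} for $k=1$. First I would perform the log resolution of Step 1 of that proof, so that the image of $\fu\fl\omega_X^{\otimes k}\to\omega_X^{\otimes k}$ becomes a line bundle $\omega_X^{\otimes k}\otimes\OX(-E)$ with $E$ of simple normal crossings, the resulting boundary being klt. Let $m$ be the smallest integer for which $\fl\omega_X^{\otimes k}\otimes L^{\otimes m}$ is globally generated; this exists because $L$ is ample. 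Choosing a general $D$ in the free system $\bigabs{\omega_X^{\otimes k}\otimes\OX(-E)\otimes\fu L^{\otimes m}}$ and setting $E' = \lfloor\tfrac{k-1}{k}E\rfloor\preceq E$, the computation of (\ref{equivalence}) yields
\[
\omega_X^{\otimes k}\otimes\OX(-E')\otimes\fu L^{\otimes l}\sim_{\RR}K_X+\Delta'+\fu H',\qquad H'=\Bigl(l-\tfrac{k-1}{k}m\Bigr)L,
\]
with $\Delta'=\tfrac{k-1}{k}D+\{\tfrac{k-1}{k}E\}$; by Remark \ref{only_klt} this $\Delta'$ is klt.

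The vanishing half of the argument never used that $L$ is globally generated, only that $H'$ is ample. Since $H'$ is ample once $l>\tfrac{k-1}{k}m$, combining the klt form of Koll\'ar vanishing (Theorem \ref{kollar_vanishing}, as in Remark \ref{only_klt}) with $\fl(\omega_X^{\otimes k}\otimes\OX(-E'))\simeq\fl\omega_X^{\otimes k}$ from (\ref{subtraction}) gives
\[
H^i\bigl(Y,\fl\omega_X^{\otimes k}\otimes L^{\otimes l}\bigr)=0\quad(i>0)\qquad\text{whenever}\quad l>\tfrac{k-1}{k}m.
\]
Everything therefore reduces to bounding $m$: if $m\le k(n+1)$ then $\tfrac{k-1}{k}m\le(k-1)(n+1)$, and the displayed vanishing holds for every $l\ge(k-1)(n+1)+1=k(n+1)-n$, which is the assertion.

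To bound $m$ I would feed the klt pair $(X,\Delta')$ into the hypothesis, Conjecture \ref{generalized_fujita} for $k=1$. Writing the Cartier divisor on the left above as $K_X+\Delta'+\alpha\fu L$ with $\alpha=l_{*}-\tfrac{k-1}{k}m$ for an integer $l_{*}$, and using $\fl(\omega_X^{\otimes k}\otimes\OX(-E'))\simeq\fl\omega_X^{\otimes k}$ once more, the klt Fujita statement shows that $\fl\omega_X^{\otimes k}\otimes L^{\otimes q}$ is globally generated as soon as $q\ge n+1+\tfrac{k-1}{k}m$. By minimality of $m$ this forces $m\le\bigl\lceil n+1+\tfrac{k-1}{k}m\bigr\rceil$.

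The main obstacle is exactly this last inequality. The Fujita number $n+1$ delivered by the conjecture is one unit weaker than what the Castelnuovo--Mumford lemma extracts inside Theorem \ref{vanishing_main_lc}, where one profits from having \emph{vanishing}, and not merely global generation, at the lower twists; unwinding the rounding gives only $\lfloor m/k\rfloor\le n+1$, that is $m\le k(n+1)+(k-1)$. This is sharp enough for $k=2$ but loses $k-2$ for $k\ge 3$. The crux of the proof is thus to recover the sharp bound $m\le k(n+1)$ --- I would try to do so by reinvesting the vanishing already established for $l>\tfrac{k-1}{k}m$ to upgrade the global generation furnished by the conjecture, rather than substituting the conjecture blindly for the Castelnuovo--Mumford step.
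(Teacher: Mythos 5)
Your proposal follows exactly the route the paper takes: its entire proof of this Proposition is the remark that one should rerun the proof of Corollary \ref{vanishing_main} (equivalently, Theorem \ref{vanishing_main_lc} in the classical case of Remark \ref{only_klt}), observe that the vanishing half only uses ampleness of $H'$, and substitute the klt form of Conjecture \ref{generalized_fujita} for the one step where global generation of $L$ enters, namely the passage from $0$-regularity to global generation. Your reduction, the klt-ness of $\Delta'$, and the identification $f_*\bigl(\omega_X^{\otimes k}\otimes\shO_X(-E')\bigr)\simeq f_*\omega_X^{\otimes k}$ all match the paper.

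The obstacle you isolate at the end is genuine, and the paper's two-sentence proof does not address it. The Castelnuovo--Mumford step converts \emph{vanishing} at the $n$ lower twists $l-1,\dots,l-n$ into global generation at $l$, so the generation threshold it produces is exactly $l_0+n$, where $l_0=\lfloor\tfrac{k-1}{k}m\rfloor+1$ is the integer vanishing threshold; the klt Fujita statement instead gives generation only for $l\ge n+1+\tfrac{k-1}{k}m$, whose least integer solution is $l_0+n+1$ whenever $k\nmid m$. That single extra unit, inserted into the fixed-point inequality $m\le\tfrac{k-1}{k}m+c$, inflates the bound on $m$ from $k(n+1)$ to $k(n+1)+k-1$, and hence yields vanishing only for $l\ge k(n+1)-n+(k-2)$, exactly as you computed. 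I do not see how to recover the sharp constant by ``reinvesting the vanishing,'' since without $L$ globally generated there is no Castelnuovo--Mumford mechanism for converting vanishing back into generation, and the conjecture consumes a klt decomposition rather than a vanishing statement. So your account is an accurate rendering of the paper's argument together with a correct observation that, for $k\ge 3$, that argument as written only delivers the threshold $l\ge k(n+1)-n+(k-2)$; either record the statement with this slightly weaker (still uniform in $f$ and $L$) constant, or note that the sharp constant would require a marginally stronger input than the global generation asserted by the conjecture (for instance a $0$-regularity-type strengthening of the klt Fujita statement).
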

\begin{proof}
This is a corollary of the proof of Corollary \ref{vanishing_main}. Indeed, the only time we used that $L$ is globally generated and not just ample was to deduce the global generation 
of a sheaf of the form $f_*B \otimes L^{\otimes l}$ from its $0$-regularity with respect to $L$, where $B$ is $\QQ$-linearly equivalent with something of the form $K_X + \Delta + \alpha f^*L$ with $(X, \Delta)$ klt and 
$\alpha \in \QQ$; see also Remark \ref{only_klt}. 
The klt version of Conjecture \ref{generalized_fujita} for $k=1$ would then serve as a replacement.
\end{proof}

A natural version of Conjecture \ref{generalized_fujita} can be stated in the log-canonical case,\footnote{In the absolute case, an Angehrn-Siu type statement has been obtained by Koll\'ar \cite{Kollar2}*{Theorem 5.8} 
in the klt case, and further extended by Fujino \cite{Fujino2}*{Theorem 1.1} to the log-canonical setting.} with the same effect regarding the result of Theorem \ref{vanishing_main_lc}, but this would take us far beyond of what is currently known.

\subsection{Vanishing and freeness for direct images of pluriadjoint bundles}
We now switch our attention to direct images 
of powers of line bundles of the form $\omega_X \otimes M$, where $M$ is a nef and relatively big line bundle. 
Recall first that Proposition \ref{very_ample} has the following analogue:

\begin{proposition}\label{very_ample_twisted}
Let $f\colon X \to Y$ be a fibration between projective varieties, with $X$ smooth and $Y$ of dimension $n$. 
Consider a nef and $f$-big line bundle $M$ on $X$, and $(X, \Delta)$ a klt pair with $\Delta$ and $\RR$-divisor with simple 
normal crossings support. If $B$ is a line bundle on $X$ such that 
$B\sim_{\RR} K_X + M + \Delta + f^*H$ for some ample $\RR$-Cartier $\RR$-divisor $H$ on $Y$, then
$$H^i (Y, f_*B) = 0 \,\,\,\, {\rm for~all~} i>0.$$
In particular, if $L$ is an ample and globally generated line bundle on $Y$, then 
$$f_*B  \otimes L^{\otimes n}$$ 
is $0$-regular, and therefore globally generated.
\end{proposition}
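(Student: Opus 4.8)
The plan is to deduce both vanishing statements from the Kawamata--Viehweg vanishing theorem, deliberately bypassing the Ambro--Fujino Theorem \ref{ambro_fujino}: since the twist $M$ is \emph{not} pulled back from $Y$, its positivity cannot be folded into the base-ample factor $f^*H$ that Ambro--Fujino requires. The starting observation is that $N := M + f^* H$ is nef and big on $X$. It is nef, being the sum of the nef divisor $M$ and the pullback of the ample class $H$. For bigness one checks that its top self-intersection is positive: in the expansion of $(M + f^*H)^{\dim X}$ every term is a product of nef classes and hence nonnegative, while the single term $\binom{\dim X}{\dim F} M^{\dim F}\cdot (f^* H)^{n}$ computes $(M|_F)^{\dim F}$ against $H^{n}$ on the general fiber $F$ and is strictly positive, because $M$ is $f$-big and $H$ is ample. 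This is the one genuinely geometric point, and it is exactly where the hypothesis that $M$ is nef \emph{and} $f$-big (rather than globally big) is used.

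Granting this, I would run two applications of vanishing and glue them with the Leray spectral sequence. First, over $Y$ the divisor $N = M + f^*H$ is $f$-nef and $f$-big, since the summand $f^*H$ is numerically trivial on fibers; so relative Kawamata--Viehweg vanishing, applied to the klt pair $(X,\Delta)$ and to $B \sim_{\RR} K_X + \Delta + N$, yields $R^j f_* B = 0$ for all $j > 0$. Consequently $\derR f_* B \simeq f_* B$, and the Leray spectral sequence gives $H^i(Y, f_* B) \cong H^i(X, B)$ for all $i$. Second, since $N$ is nef and big on $X$ and $(X,\Delta)$ is klt, the absolute Kawamata--Viehweg vanishing theorem gives $H^i(X, B) = 0$ for all $i > 0$. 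Combining the two identities yields $H^i(Y, f_* B) = 0$ for $i > 0$, the first assertion.

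For the ``in particular'' part I would argue exactly as \lemmaref{very_ample_lc} is deduced from Theorem \ref{ambro_fujino}. For any integer $j \ge 0$ the class $H + jL$ is again ample on $Y$, so the first part, applied with $H$ replaced by $H + jL$ and combined with the projection formula $f_*(B \otimes f^* L^{\otimes j}) \simeq f_* B \otimes L^{\otimes j}$, gives $H^i(Y, f_* B \otimes L^{\otimes j}) = 0$ for all $i > 0$ and all $j \ge 0$. Taking $j = n - i$ for $1 \le i \le n$ (the cases $i > n$ being automatic since $\dim Y = n$) shows that $f_* B \otimes L^{\otimes n}$ is $0$-regular with respect to $L$, and the Castelnuovo--Mumford Lemma then gives global generation.

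The main obstacle is the numerical input of the first paragraph: one must convert the \emph{relative} positivity of $M$ together with the \emph{base} positivity of $H$ into honest nefness-and-bigness of $M + f^*H$ on the total space, precisely because $M$ is not a pullback and Ambro--Fujino cannot be invoked directly. Once that point is secured, the remainder is the standard relative/absolute Kawamata--Viehweg package together with Castelnuovo--Mumford regularity, in complete parallel with how Proposition \ref{very_ample} follows from Koll\'ar vanishing.
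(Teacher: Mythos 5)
Your proposal is correct and follows essentially the same route as the paper's own proof: establish that $M + f^*H$ is nef and $f$-big to kill the higher direct images via relative Kawamata--Viehweg, verify bigness of $M+f^*H$ on $X$ by expanding the top self-intersection and isolating the strictly positive term $M^{\dim X - n}\cdot f^*H^{n}$, and then apply absolute Kawamata--Viehweg together with Leray. The deduction of $0$-regularity by twisting $H$ by multiples of $L$ and invoking Castelnuovo--Mumford is likewise the standard step the paper has in mind.
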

\begin{proof}
We include the well-known proof for completeness, as it is usually given in the case $\Delta = 0$ (see e.g. \cite{Hoering}*{Lemma 3.28}). Note first that $M + f^*H $ continues to be a nef and $f$-big $\RR$-divisor on $X$. The local version of the Kawamata-Viehweg vanishing theorem (see \cite{Lazarsfeld}*{9.1.22 and 9.1.23}) applies then to give
$$R^i f_* B = 0 \,\,\,\, {\rm for~all~} i >0.$$ 
We conclude that it is enough to show 
$$H^i (X, B) = 0 \,\,\,\, {\rm for~all~} i >0.$$
This will follow from the global $\RR$-version of Kawamata-Viehweg vanishing as soon as we show that $M+ f^*H$ is 
in fact a big divisor. Since it is nef, it suffices to check that $(M + f^*H)^m >0$, where $m = \dim X$. Now $(M + f^*H)^m$ is a linear combination with positive coefficients of terms of the form
$$M^s \cdot f^* H^{m -s}$$
which are all non-negative. Moreover, since $M$ is $f$-big, the term $M^{m-n} \cdot f^*H^n$ is strictly positive, which gives the conclusion.
\end{proof}

We now prove an analogue of Corollary \ref{vanishing_main} in this context. Just as with Theorem \ref{main}, Variant \ref{main_adjoint} is its immediate consequence.

\begin{theorem}\label{vanishing_main_adjoint}
Let $f\colon X\to Y$ be a fibration between projective varieties, with $X$ smooth and $Y$ of dimension $n$. Let $M$ be a nef and 
$f$-big line bundle on $X$. If $L$ is an ample and globally generated line bundle on $Y$, and $k \ge 1$ an integer, then
$$H^i (Y,  f_* (\omega_X\otimes M)^{\otimes k} \otimes L^{\otimes l})= 0 \,\,\,\,{\rm for ~all~} i >0 \,\,\,\, {\rm and} 
\,\,\,\, l \ge k(n+1) -n.$$
\end{theorem}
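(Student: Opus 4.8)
The plan is to follow the proof of \corollaryref{vanishing_main} essentially verbatim, carrying the nef and $f$-big line bundle $M$ along as an extra summand, and to replace the final appeal to Koll\'ar vanishing by the twisted vanishing statement \propositionref{very_ample_twisted}, which was designed precisely to accommodate such an $M$. Writing $B := (\omX \otimes M)^{\otimes k}$, so that $B \sim_{\RR} k(K_X + M)$ in divisor notation, the guiding observation is that every manipulation in that proof splits off a single copy of $K_X$ and leaves a residual fractional divisor whose coefficients are strictly less than $1$. Since here the boundary starts at $\Delta = 0$, the divisor produced at the end is automatically klt, which is exactly the hypothesis of \propositionref{very_ample_twisted}.

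First I would reduce, via a common log-resolution $\mu\colon \tilde X \to X$ of the relative base ideal $\frak{b}$ of $B$, to the situation in which the image of the adjunction morphism $\fu\fl B \to B$ is of the form $B \otimes \OX(-E)$ for a divisor $E$ with simple normal crossings support. The only genuinely new points are to check that the composition $f \circ \mu$ is still a fibration, that $\mu^{*} M$ remains nef and $(f\circ\mu)$-big (pullback preserves nefness, and preserves relative bigness because $\mu$ restricts to a birational map on a general fiber), and that, writing $K_{\tilde X} - \mu^{*} K_X = P$ with $P$ effective and exceptional, the bundle $\tilde B := (\omega_{\tilde X} \otimes \mu^{*} M)^{\otimes k} \simeq \mu^{*} B \otimes \shO_{\tilde X}(k \lceil P \rceil)$ satisfies $\mu_{*} \tilde B \simeq B$. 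After this reduction I may assume $X$ smooth and $M$ nef and $f$-big.

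Next, letting $m \ge 0$ be the smallest integer for which $f_{*} B \otimes L^{\otimes m}$ is globally generated, the adjunction morphism lets me write $B \otimes \fu L^{\otimes m} \simeq \OX(D + E)$ with $D$ a smooth irreducible divisor not contained in the support of $E$ and such that $D + E$ has simple normal crossings support. Setting $E' := \lfloor \tfrac{k-1}{k} E \rfloor \preceq E$, the base-locus identity $\fl(B \otimes \OX(-E')) \simeq \fl B$ holds exactly as in \theoremref{vanishing_main_lc}, and a short computation gives
$$B - E' + l\fu L \sim_{\RR} K_X + M + \Delta' + \fu H',$$
where $\Delta' := \tfrac{k-1}{k}E - E' + \tfrac{k-1}{k}D$ has all coefficients in $[0,1)$, hence is klt with simple normal crossings support, and $H' := (l - \tfrac{k-1}{k}m)L$ is ample on $Y$ as soon as $l > \tfrac{k-1}{k}m$. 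Since $M$ appears here with coefficient exactly $1$, applying \propositionref{very_ample_twisted} to the right-hand side yields $H^i(Y, f_{*} B \otimes L^{\otimes l}) = 0$ for all $i > 0$ and all $l > \tfrac{k-1}{k}m$.

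Finally, the numerical bound comes from the minimality of $m$, exactly as in \corollaryref{vanishing_main}: the displayed vanishing gives $0$-regularity, hence global generation, of $f_{*} B \otimes L^{\otimes l}$ for every $l > \tfrac{k-1}{k}m + n$, so $m \le l_0 + n$ for the least integer $l_0 > \tfrac{k-1}{k}m$, which forces $m \le k(n+1)$; feeding this back shows the vanishing holds for all $l \ge (k-1)(n+1) + 1 = k(n+1) - n$. The hard part will be purely the bookkeeping in the reduction step: one must be sure that passing to $\tilde X$ genuinely preserves the fibration hypothesis together with the nefness and relative bigness of $M$, and that $M$ is carried through all the linear-equivalence manipulations with coefficient precisely $1$, since these are the exact conditions under which \propositionref{very_ample_twisted} may be invoked. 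Once that is in place, the numerology is identical to the pluricanonical case.
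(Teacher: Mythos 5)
Your proposal is correct and follows essentially the same route as the paper's proof: choose the minimal $m$ with $f_*B\otimes L^{\otimes m}$ globally generated, write $B\otimes \fu L^{\otimes m}\simeq \OX(D+E)$ after a suitable blow-up, subtract $\lfloor\tfrac{k-1}{k}E\rfloor$ using the relative-base-locus identity, recognize the result as $K_X+M+(\text{klt boundary})+\fu H'$, and invoke Proposition \ref{very_ample_twisted} before running the same minimality argument to get $m\le k(n+1)$. The only difference is that you spell out the log-resolution reduction (and the preservation of the fibration and of nef/$f$-bigness of $M$) explicitly, where the paper compresses this into ``after possibly blowing up''; the numerology is identical.
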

\begin{proof}
\noindent
 The strategy is similar to that of the proof of Theorem \ref{vanishing_main_lc}, so we will be brief in some of the steps.
We consider the minimal $m \ge 0$ such that
$f_* (\omega_X\otimes M)^{\otimes k}  \otimes L^{\otimes m}$
is globally generated. Using the adjunction morphism 
\[
	\fu \fl (\omX\otimes M)^{\otimes k} \to (\omX\otimes M)^{\otimes k}.
\]
after possibly blowing up we can write 
\[
	(\omX \tensor M )^{\otimes k} \otimes f^* L^{\otimes m} \simeq \OX(D+E),
\]
with $D$ smooth and $D+E$ a divisor with simple normal crossings support. In divisor notation, we obtain
\begin{equation}\label{equiv_adjoint}
K_X + M \,\,\,\, \sim_{\QQ} \,\,\,\, \frac{1}{k} D + \frac{1}{k} E - \frac{m}{k} f^*L.
\end{equation}
For any integer $l \ge 0$, using ($\ref{equiv_adjoint}$) we can then write the following equivalence:
$$k(K_X + M) - \left\lfloor \frac{k-1}{k} E \right\rfloor + lf^*L =  K_X + M + (k-1) (K_X + M) - \left\lfloor \frac{k-1}{k} E \right\rfloor + lf^*L$$
$$\sim_{\QQ} \,\,\,\, K_X + M + \Delta + 
\left(l - \frac{k-1}{k}\cdot m \right) f^*L,$$
where
$$\Delta = \frac{k-1}{k} D + \frac{k-1}{k} E - \left\lfloor \frac{k-1}{k} E \right\rfloor $$
is a boundary divisor with simple normal crossings support. Since $E$ is the base divisor of $(\omX\otimes M)^{\otimes k}$ relative to $f$, just as in the proof 
of Theorem \ref{vanishing_main_lc} it follows that 
$$f_* \shO_X \left( k(K_X + M) - \left\lfloor \frac{k-1}{k} E \right\rfloor + lf^*L\right) \simeq  
f_* (\omega_X\otimes M)^{\otimes k}  \otimes L^{\otimes l}.$$
On the other hand, on the right hand side we can apply Proposition \ref{very_ample_twisted}, to deduce that
$$H^i (Y,  f_* (\omega_X\otimes M)^{\otimes k} \otimes L^{\otimes l})= 0 \,\,\,\,{\rm for ~all~} i >0 \,\,\,\, {\rm and} 
\,\,\,\, l > \frac{k-1}{k} \cdot m.$$
We conclude that $f_* (\omega_X\otimes M)^{\otimes k} \otimes L^{\otimes l}$ is globally generated for 
$l > \frac{k-1}{k} \cdot m + n$. Since $m$ was chosen minimal, we conclude as in Theorem \ref{vanishing_main_lc} that
$m\le k(n+1)$, and that vanishing holds for all $l \ge k (n+1) - n$.
\end{proof}

\begin{remark}
Fujita's conjecture and all similar statements have more refined numerical versions, replacing 
$L^{\otimes n+1}$ by any ample line bundle $A$ such that $A^{\dim V} \cdot V > (\dim V)^{\dim V}$ for any subvariety $V \subseteq X$. Similarly, the analogues of Conjecture \ref{generalized_fujita} and Corollary  \ref{ample_powers} make sense replacing $\omega_X$ by $\omega_X \otimes M$ as well. 
\end{remark}

\subsection{Effective weak positivity, and additivity of adjoint Iitaka dimension}\label{section_WP}
We start by recalling the following fundamental definition of Viehweg; see e.g. \cite{Viehweg1}*{\S1}:

\begin{definition}
A torsion-free coherent sheaf $\shF$ on a projective variety $X$ is \emph{weakly positive} on a non-empty open set 
$U \subseteq X$ 
if for every ample line bundle $A$ on $X$ and every $a\in \NN$, the sheaf $S^{[ab]} \shF \otimes A^{\otimes b}$ is generated by 
global sections at each point of $U$ for $b$ sufficiently large. 
(Here $S^{[p]} \shF$ denotes the reflexive hull of the symmetric power $S^p \shF$.)
As noted in \cite{Viehweg1}*{Remark~1.3}, it is not hard to see that 
it is enough to check this definition for a fixed line bundle $A$.
\end{definition}

In \cite{Kollar}*{\S3}, Koll\'ar introduced an approach to proving the weak positivity of sheaves of the form 
$f_* \omega_{X/Y}$ based on his vanishing theorem  for $f_* \omega_X$, which in particular gives effective 
statements. Here we first provide a complement to Koll\'ar's result, using Theorem \ref{main}, in order to make 
this approach work for all $f_* \omega_{X/Y}^{\otimes k}$ with $k \ge 1$.
Concretely, below is the analogue of \cite{Kollar}*{Theorem 3.5(i)}; the proof is very similar, and we only sketch it for convenience.

\begin{theorem}\label{generic}
Let $f\colon X\to Y$ be a surjective morphism of smooth projective varieties, with generically reduced fibers in codimension one.\footnote{This means that there exists a closed subset $Z \subset Y$ of codimension at least $2$ such that over $Y - Z$ the fibers of $f$ are generically reduced. This condition is realized for instance if there is such a $Z$ such that over $Y-Z$ the branch locus of $f$ is smooth, and its preimage is a simple normal crossings divisor; see \cite{Kollar}*{Lemma 3.4}.}
Let $L$ be an ample and globally generated line bundle on $Y$, and $A = \omega_Y
\otimes L^{\otimes n+1}$, where $n = \dim Y$. Then for every $s \geq 1$, the sheaf
$$f_*(\omega_{X/Y}^{\otimes k})^{[\otimes s]}  \otimes A^{\otimes k}$$
is globally generated over a fixed open set $U$ containing the smooth locus of $f$; here  $f_*(\omega_{X/Y}^{\otimes k})^{[\otimes s]}$ denotes the reflexive hull of $f_*(\omega_{X/Y}^{\otimes k})^{\otimes s}$.
\end{theorem}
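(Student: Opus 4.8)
The plan is to follow Koll\'ar's fiber-product method from \cite{Kollar}*{\S3}, substituting the uniform global generation of Corollary \ref{uniform_higher} for his $k=1$ input. Write $\shF := f_* \omega_{X/Y}^{\otimes k}$ and let $U_0 \subseteq Y$ be the smooth locus of $f$, over which $\shF$ is locally free and commutes with base change. First I would form the $s$-fold fiber product $X^s_Y := X \times_Y \cdots \times_Y X$, choose a resolution $\rho \colon X^{(s)} \to X^s_Y$ with $X^{(s)}$ smooth and projective, and let $f^{(s)} \colon X^{(s)} \to Y$ be the induced morphism of smooth projective varieties. Over $U_0$ the fiber product is already smooth and $\rho$ is an isomorphism, so $\omega_{X^s_Y/Y}$ is the external tensor product of the $\omega_{X/Y}$ along the factors; flat base change and the K\"unneth formula then yield a canonical isomorphism $f^{(s)}_* \omega_{X^{(s)}/Y}^{\otimes k} \cong \shF^{\otimes s}$ over $U_0$, where the right-hand side agrees with the reflexive hull $\shF^{[\otimes s]}$ because $\shF$ is locally free there.

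The central step is to promote this local identification to a natural map $f^{(s)}_* \omega_{X^{(s)}/Y}^{\otimes k} \to \shF^{[\otimes s]}$ of sheaves on all of $Y$ that is an isomorphism in codimension one. Here I would invoke the hypothesis that $f$ has generically reduced fibers in codimension one, in the form of \cite{Kollar}*{Lemma 3.4}: it produces a closed subset $Z \subseteq Y$ of codimension at least $2$, depending only on $f$ and not on $s$, such that over the fixed open set $U := Y \setminus Z \supseteq U_0$ the fiber-product and base-change constructions remain well behaved. Since $\shF^{[\otimes s]}$ is reflexive, it is determined by its restriction to $U$, so the comparison map is an isomorphism over $U$ and the two torsion-free sheaves differ only on $Z$.

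With this comparison in hand, I would apply Corollary \ref{uniform_higher} to the morphism $f^{(s)}$: as $A = \omega_Y \otimes L^{\otimes n+1}$ depends only on $Y$ and $L$, the sheaf $f^{(s)}_* \omega_{X^{(s)}/Y}^{\otimes k} \otimes A^{\otimes k}$ is globally generated, with the same $A$ for every $s$. Twisting the comparison map by $A^{\otimes k}$ and using that it is an isomorphism over $U$, global generation transfers to $\shF^{[\otimes s]} \otimes A^{\otimes k} = f_*(\omega_{X/Y}^{\otimes k})^{[\otimes s]} \otimes A^{\otimes k}$ at every point of the fixed open set $U$, which is the assertion.

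I expect the main obstacle to be the second step: constructing the comparison map and checking it is an isomorphism in codimension one. The difficulty is that the resolved fiber product $X^{(s)}$ coincides with the honest construction only over $U_0$, and one must control the relative pluricanonical direct image across the divisorial part of the locus where fibers degenerate. This is precisely where the generically-reduced-in-codimension-one hypothesis is used: via Koll\'ar's lemma it confines all discrepancies to a set $Z$ of codimension at least $2$, so that the reflexive hulls---which detect only codimension one---are correctly matched, and the open set $U$ can be chosen independently of $s$.
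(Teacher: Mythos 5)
Your proposal follows essentially the same route as the paper's proof: the Viehweg--Koll\'ar fiber-product construction, a comparison map into the reflexive hull that is an isomorphism over a fixed open set containing the smooth locus and is extended to all of $Y$ using reflexivity, and then Corollary~\ref{uniform_higher} applied to $f^{(s)}$ with the same $A$ for every $s$. The one point to tighten is the step you yourself flag as the main obstacle: one should desingularize the unique irreducible component of $X\times_Y\cdots\times_Y X$ dominating $Y$, and the generically-reduced hypothesis is used to make that component normal and Gorenstein over an open set with complement of codimension at least two, which is exactly what yields the needed comparison map $\mu_*\omega_{X^{(s)}/Y}^{\otimes k}\to\omega_{X^s/Y}^{\otimes k}$ for $k\ge 1$.
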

\begin{proof}
As in \cite{Viehweg1}*{\S3} and in the proof of \cite{Kollar}*{Theorem 3.5}, based on Viehweg's fiber product construction one can show that there is an open set $U \subset Y$, whose complement $Y - U$  has codimension at least $2$,
over which there exists a morphism 
$$\varphi\colon f^{(s)}_* (\omega_{X^{(s)}/Y}^{\otimes k}) \longrightarrow  
f_*(\omega_{X/Y}^{\otimes k})^{[\otimes s]}$$ 
which is an isomorphism over the smooth locus of $f$. Here $\mu \colon X^{(s)} \to X^s$ is a desingularization of the unique irreducible component $X^s$ of the $s$-fold fiber product of $X$ over $Y$ which dominates $Y$; we have natural morphisms 
$f^s: X^s \rightarrow Y$ and $f^{(s)}= f^s \circ \mu: X^{(s)} \rightarrow Y$. The reason one can do this for 
any $k \ge 1$ is this: the hypothesis on the morphism implies that $X^s$ is normal and Gorenstein over such a $U$ (contained in the flat locus of $f$) with complement of small codimension; see also \cite{Hoering}*{Lemma 3.12}. In particular, for every $k \ge 1$ there is a morphism 
$$t: \mu_* \omega_{X^{(s)}/Y}^{\otimes k} \longrightarrow \omega_{X^s/Y}^{\otimes k}$$
which induces $\varphi$.

Now, without changing the notation, we can pass to a compactification of $X^{(s)}$, and the morphism $\varphi$ extends to a morphism of sheaves on $Y$, since it is defined in codimension one and the sheaf on the right is reflexive.
Corollary \ref{uniform_higher} says that $f^{(s)}_* (\omega_{X^{(s)}/Y}^{\otimes k})\otimes A^{\otimes k}$ is globally generated for all $s$ and $k$, which implies that $f_*(\omega_{X/Y}^{\otimes k})^{[\otimes s]} \otimes A^{\otimes k}$  is generated by global sections over the locus where $\varphi$ is an isomorphism.
\end{proof}

\begin{corollary}[Viehweg, \cite{Viehweg1}*{Theorem III}]\label{WP}
If $f\colon X \to Y$ is a surjective morphism of smooth projective varieties, then $f_*\omega_{X/Y}^{\otimes k}$ is weakly positive for every $k \ge 1$. 
\end{corollary}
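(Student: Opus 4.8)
The plan is to deduce the weak positivity of $\shF:=f_*\omega_{X/Y}^{\otimes k}$ from the effective global generation supplied by Theorem \ref{generic}, in two stages: first reduce $f$ to the situation covered by that theorem, and then repackage the resulting generation statement into the form demanded by the definition of weak positivity.

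For the reduction I would invoke Viehweg's base-change formalism \cite{Viehweg1}*{\S3}. The obstruction to applying Theorem \ref{generic} directly is that an arbitrary surjective $f$ need not have generically reduced fibers in codimension one: this fails exactly along divisors carrying multiple fibers, and such multiplicities cannot be removed by birational modifications of $X$ and $Y$ alone. I would therefore pass to a suitable, possibly ramified, base change $\tau\colon Y'\to Y$ together with a desingularization $f'\colon X'\to Y'$ of the dominant component of the fiber product, arranged by means of Koll\'ar's Lemma \cite{Kollar}*{Lemma 3.4} so that $f'$ has generically reduced fibers in codimension one. Weak positivity of $\shF$ is then recovered from that of $\shF':=f'_*\omega_{X'/Y'}^{\otimes k}$ using Viehweg's lemmas on the stability of weak positivity under finite and birational base change together with passage to direct summands, the point being that in characteristic zero the trace map of the finite cover propagates weak positivity from $Y'$ down to $Y$. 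I expect this reduction --- correctly tracking the pluricanonical sheaves through the base change and applying the covering lemmas --- to be the main obstacle; everything after it is formal.

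Granting the reduction, Theorem \ref{generic} with $A=\omega_Y\otimes L^{\otimes n+1}$ gives a fixed open set $U$ with $\codim(Y\setminus U)\ge 2$ over which $\shF^{[\otimes s]}\otimes A^{\otimes k}$ is globally generated for every $s\ge 1$. Working in characteristic zero, the symmetrization idempotent realizes $S^{[s]}\shF$ as a direct summand of $\shF^{[\otimes s]}$, a splitting that persists after passing to reflexive hulls, so that
\[
	S^{[s]}\shF\otimes A^{\otimes k}\ \text{ is globally generated over }U\ \text{ for every }s\ge 1.
\]

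Finally I would match this with the definition of weak positivity. Fix any ample line bundle $H$ on $Y$; as noted in the Definition, it suffices to test the condition against a single fixed ample line bundle. For a given $a\in\NN$ write
\[
	S^{[ab]}\shF\otimes H^{\otimes b}\simeq\bigl(S^{[ab]}\shF\otimes A^{\otimes k}\bigr)\otimes\bigl(H^{\otimes b}\otimes A^{\otimes -k}\bigr).
\]
The first factor is globally generated over $U$ by the previous step (taking $s=ab$), while the second is a globally generated line bundle once $b$ is large, since $H$ is ample and $A^{\otimes -k}$ is a fixed line bundle. Hence $S^{[ab]}\shF\otimes H^{\otimes b}$ is globally generated over $U$ for all $b\gg 0$, which is precisely the assertion that $\shF$ is weakly positive on $U$.
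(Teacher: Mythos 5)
Your proposal is correct and follows essentially the same route as the paper, which likewise deduces the corollary from Theorem~\ref{generic} by semistable reduction along the lines of \cite{Viehweg1}*{Lemma 3.2 and Proposition 6.1} and then formal manipulations with symmetric powers and twists. The paper leaves the entire deduction as a one-sentence citation, so your spelled-out version of the symmetrization and twisting steps is a faithful (and slightly more detailed) rendering of the intended argument.
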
 

This follows in standard fashion from Theorem \ref{generic}, by passing to semistable reduction along the lines of \cite{Viehweg1}*{Lemma 3.2 and Proposition 6.1}. This was already noted by Koll\'ar in the case $k=1$, in \cite{Kollar}*{Corollary 3.7} and the preceding comments. As mentioned above, the Theorem has the advantage of producing an effective bound, at least for sufficiently nice morphisms. We note also that recently Fujino \cite{Fujino4} has used the argument above in order to deduce results on the semipositivity of direct images of pluricanonical bundles.

We now switch our attention to the context of direct images 
of adjoint line bundles of the form $\omega_X \otimes M$, where $M$ is a nef and $f$-big line bundle for a fibration $f\colon X\to Y$.  Given Theorem \ref{vanishing_main_adjoint}, we are now able to use the cohomological approach to weak positivity for higher powers 
of adjoint bundles as well.
Concretely, Theorem \ref{WP_adjoint} again follows via Viehweg's semistable reduction methods 
from the following analogue of the effective Theorem \ref{generic}. 

\begin{theorem}\label{generic_adjoint}
Let $f\colon X\to Y$ be a fibration between smooth projective varieties, with generically reduced fibers in codimension one. 
Let $M$ be a nef and $f$-big line bundle on $X$, $L$ an ample and globally generated line bundle on $Y$, 
and $A = \omega_Y \otimes L^{\otimes n+1}$ with $n = \dim Y$. Then
$$f_*\left( (\omega_{X/Y}\otimes M)^{\otimes k}\right)^{[\otimes s]}  \otimes A^{\otimes k}$$
is globally generated over a fixed nonempty open set $U$ for any $s \ge 1$.
\end{theorem}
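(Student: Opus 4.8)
The plan is to follow the proof of Theorem~\ref{generic} almost verbatim, systematically replacing $\omega_{X/Y}$ by $\omega_{X/Y} \otimes M$ and carrying along a suitable twist on the fiber product. First I would form, exactly as there, the unique irreducible component $X^s$ dominating $Y$ of the $s$-fold fiber product of $X$ over $Y$, with $f^s\colon X^s \to Y$, a desingularization $\mu\colon X^{(s)} \to X^s$, and $f^{(s)} = f^s \circ \mu$. Writing $p_i \colon X^s \to X$ for the projections, I would introduce the line bundle $M^{(s)} := \mu^*(p_1^* M \otimes \cdots \otimes p_s^* M)$ on $X^{(s)}$. Since each $p_i^* M$ is nef, $M^{(s)}$ is nef; and over the smooth locus $U_0 \subseteq Y$ of $f$ the fiber of $f^{(s)}$ is a product $F^s$ on which $M^{(s)}$ restricts to $M_F \boxtimes \cdots \boxtimes M_F$, whose top self-intersection equals $\binom{sd}{d,\ldots,d}(M_F^{d})^s > 0$ with $d = \dim F$, because $M_F$ is nef and big on $F$. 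Hence $M^{(s)}$ is $f^{(s)}$-big, and since $F^s$ is irreducible, $f^{(s)}$ is again a fibration, so Variant~\ref{main_adjoint} is applicable to the pair $(f^{(s)}, M^{(s)})$.

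Next I would construct the comparison morphism. The point that makes the twist painless is that $M^{(s)}$ is pulled back from $X^s$: by the projection formula $\mu_*(\omega_{X^{(s)}/Y} \otimes M^{(s)})^{\otimes k} \simeq \mu_*(\omega_{X^{(s)}/Y}^{\otimes k}) \otimes (p_1^* M \otimes \cdots \otimes p_s^* M)^{\otimes k}$, so tensoring the morphism $t\colon \mu_* \omega_{X^{(s)}/Y}^{\otimes k} \to \omega_{X^s/Y}^{\otimes k}$ of Theorem~\ref{generic} (available over an open $V \subseteq Y$ with $Y \setminus V$ of codimension $\ge 2$, where $X^s$ is normal and Gorenstein) by $(p_1^* M \otimes \cdots \otimes p_s^* M)^{\otimes k}$ and pushing forward by $f^s$ yields a morphism $\varphi\colon f^{(s)}_*(\omega_{X^{(s)}/Y} \otimes M^{(s)})^{\otimes k} \to f_*((\omega_{X/Y} \otimes M)^{\otimes k})^{[\otimes s]}$. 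Over $U_0$ the K\"unneth formula identifies the fiber of the source with $\bigotimes^s H^0(F, (\omega_F \otimes M_F)^{\otimes k})$, matching the fiber of the $s$-th tensor power on the target, so $\varphi$ is an isomorphism there; after compactifying $X^{(s)}$ it is defined in codimension one and, the target being reflexive, extends over all of $Y$.

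Finally I would apply Variant~\ref{main_adjoint} to $f^{(s)}$ with $l = k(n+1)$: combined with the projection formula and $A^{\otimes k} = \omega_Y^{\otimes k} \otimes L^{\otimes k(n+1)}$, this shows that $f^{(s)}_*(\omega_{X^{(s)}/Y} \otimes M^{(s)})^{\otimes k} \otimes A^{\otimes k}$ is globally generated; the surjectivity of $\varphi$ over $U_0$ then propagates this to the target, giving global generation over $U_0$, which serves as the required nonempty open set. The only genuinely new points beyond Theorem~\ref{generic} — and hence where I expect the work to concentrate — are verifying that $M^{(s)}$ remains nef and $f^{(s)}$-big after the product and desingularization (the bigness-on-products computation above) and checking that the normal and Gorenstein structure of $X^s$ used to build $t$ is unaffected by the twist; both are manageable precisely because $M^{(s)}$ is a pullback from $X^s$, so the entire construction reduces to the untwisted case of Theorem~\ref{generic} tensored by a line bundle.
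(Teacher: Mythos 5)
Your proposal is correct and follows essentially the same route as the paper: the same fiber-product construction with $M^{(s)}$ the pullback of the box product of copies of $M$, the same comparison morphism $\varphi$ built from the morphism $t$ of Theorem~\ref{generic} via flatness and the projection formula, and the same final appeal to Variant~\ref{main_adjoint} together with the reflexivity of the target. Your explicit check that $M^{(s)}$ is nef and $f^{(s)}$-big (via the top self-intersection of $M_F \boxtimes \cdots \boxtimes M_F$) is a detail the paper leaves implicit, but it is exactly the verification needed to invoke Variant~\ref{main_adjoint}.
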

\begin{proof}
Using the notation in the proof of Theorem \ref{generic}, over the same open subset $U \subset Y$ with complement of codimension at least $2$, 
one has a morphism which is generically an isomorphism:
\begin{equation}\label{twisted}
\varphi\colon f^{(s)}_* \left((\omega_{X^{(s)}/Y}\otimes M^{(s)})^{\otimes k}\right)\longrightarrow  
f_*\left((\omega_{X/Y}\otimes M)^{\otimes k}\right)^{[\otimes s]}.
\end{equation}
Here $M^{(s)}$ is the line bundle on the desingularization $X^{(s)}$ defined inductively as  
$$M^{(s)} : = p_1^* M \otimes p_2^* M^{(s-1)},$$
with $p_1$ and $p_2$ the projections of $X^{(s)}$ to $X$ and $X^{(s-1)}$ respectively. 
The morphism in ($\ref{twisted}$) is obtained as a consequence of flatness and the projection formula; an excellent detailed discussion of the case $k=1$, as well as of this whole circle of ideas, can be found in \cite{Hoering}*{\S3.D}, in particular Lemma 3.15 and Lemma 3.24. The case $k > 1$ follows completely analogously, given the morphism $t$ in the proof of Theorem 
\ref{generic}.

Finally, Variant \ref{main_adjoint} immediately gives the analogue of Corollary \ref{uniform_higher} for twists by nef and relatively big line bundles, implying that 
$f^{(s)}_* \left((\omega_{X^{(s)}/Y}\otimes M^{(s)})^{\otimes k}\right) \otimes A^{\otimes k}$ is globally generated for all $s$ and $k$. Combined with the reflexivity of the right hand side, this leads to the desired conclusion.
\end{proof}

We conclude by noting that Corollary \ref{WP} has a natural extension to the setting of log-canonical pairs; see 
\cite{Campana}*{\S4}, and also \cite{Fujino}*{\S6}. It is an interesting and delicate problem to obtain an analogue of Theorem \ref{generic} in this setting as well.

\noindent
{\bf Subadditivity of Iitaka dimension for adjoint bundles.}
Theorem \ref{WP_adjoint} allows us to make use of an argument developed by Viehweg in order to provide the analogue in the adjoint setting of his result 
\cite{Viehweg1}*{Corollary IV}  on the subadditivity of Kodaira 
dimension for fibrations with base of general type. 

\noindent
\begin{proof}[Proof of Theorem \ref{Iitaka}]
Note that the $\le $ inequalities are consequences of the Easy Addition formula; see \cite{Mori}*{Corollary 1.7}. The proof of the reverse inequalities closely follows the ideas of Viehweg \cite{Viehweg1} based on the use of weak positivity, as streamlined by Mori with the use of a result of Fujita; we include it below for completeness. Namely, we 
will apply the following lemma (but  not directly for the line bundles on the left hand side in (i) and (ii)).

\begin{lemma}[\cite{Fujita}*{Proposition 1}, \cite{Mori}*{Lemma 1.14}]\label{technical}
Let $f\colon X\to Y$ be a fibration with general fiber $F$, and $N$ a line bundle on $X$. Then there exists a big line bundle $L$ on $Y$ and an integer $m > 0$  with $f^*L \hookrightarrow N^{\otimes m}$ if and only if 
$$\kappa (N)  = \kappa (N_F) + \dim Y.$$ 
\end{lemma}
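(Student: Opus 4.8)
The plan is to treat the two implications separately, using that the Easy Addition formula $\kappa(N) \le \kappa(N_F) + \dim Y$ (already quoted at the start of the proof of Theorem~\ref{Iitaka}) reduces the asserted equality to the single inequality $\kappa(N) \ge \kappa(N_F) + \dim Y$. Throughout I write $d = \dim Y$, $\kappa = \kappa(N_F)$, and $\mathcal{F}_j = f_* N^{\otimes j}$, a torsion-free sheaf on $Y$ of generic rank $r_j = h^0(F, N_F^{\otimes j})$. I take as a standing hypothesis $\kappa(N) \ge 0$ (the setting of the application); note that under equality this forces $\kappa = \kappa(N) - d \ge 0$, and that the case $\kappa(N) = -\infty$ must genuinely be excluded, since for a product $X = F \times Y$ with $\kappa(N_F) = -\infty$ the equality holds vacuously while no embedding exists.

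\emph{Embedding implies equality.} Suppose $f^* L \hookrightarrow N^{\otimes m}$ with $L$ big, so that $N^{\otimes m} \simeq f^* L \otimes \OX(D)$ for an effective divisor $D$. For all $r, j \ge 0$, multiplication by the section cutting out $rD$ gives an inclusion $N^{\otimes j} \otimes f^* L^{\otimes r} \hookrightarrow N^{\otimes(mr+j)}$, whence by the projection formula
$$h^0\bigl(X, N^{\otimes(mr+j)}\bigr) \ge h^0\bigl(Y, \mathcal{F}_j \otimes L^{\otimes r}\bigr).$$
A generic trivialization $\OY^{\oplus r_j} \hookrightarrow \mathcal{F}_j$ then yields $h^0(Y, \mathcal{F}_j \otimes L^{\otimes r}) \ge r_j \cdot h^0(Y, L^{\otimes r}) \ge r_j \cdot c\,r^{d}$ for $r \gg 0$, the constant $c$ being independent of $j$ because $L$ is big. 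Letting $j$ and $r$ grow together with $mr + j \approx t$ (and $j$ running through the multiples of the integer that realizes $\kappa$, so that $r_j \gtrsim j^{\kappa}$) produces $h^0(X, N^{\otimes t}) \gtrsim t^{\kappa + d}$, i.e. $\kappa(N) \ge \kappa + d$.

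\emph{Equality implies embedding.} Here I would pass to a birational model $\pi \colon X' \to X$ on which the Iitaka fibration $g \colon X' \to Z$ of $N' = \pi^* N$ is a morphism with connected fibers, $\dim Z = \kappa(N)$, and $g^* A \hookrightarrow N'^{\otimes m}$ for some ample $A$ on $Z$ and $m \gg 0$. Consider $(f', g)\colon X' \to Y \times Z$ with $f' = f \circ \pi$, and let $W$ be its image. As $W \to Z$ is dominant, $\dim W \ge \dim Z = \kappa + d$; as the general fiber of $W \to Y$ is $\overline{g(F')}$, of dimension at most $\kappa$, the equality hypothesis forces $\dim W = \dim Z$ and hence $W \to Z$ generically finite. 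The decisive consequence is that for general $z \in Z$ the set $f'(g^{-1}(z))$ lies in the finite fiber of $W \to Z$ over $z$, yet is the continuous image of the connected fiber $g^{-1}(z)$, so it is a single point. By rigidity $f'$ factors as $\sigma \circ g$ for a dominant rational map $\sigma \colon Z \dashrightarrow Y$. Now pick any ample (hence big) $L$ on $Y$: since $A$ is ample, $A^{\otimes r} \otimes \sigma^* L^{-1}$ is effective for $r \gg 0$, giving $\sigma^* L \hookrightarrow A^{\otimes r}$; pulling back by $g$ and composing with $(g^*A)^{\otimes r} \hookrightarrow N'^{\otimes mr}$ gives $\pi^*(f^* L) = f'^* L \hookrightarrow \pi^* N^{\otimes mr}$, and pushing forward along $\pi$ (with $\pi_* \OXp = \OX$) yields $f^* L \hookrightarrow N^{\otimes mr}$, as required.

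The first implication is routine bookkeeping with Iitaka dimensions; the real work lies in the converse, and there the crux is the rigidity step — that equality of Iitaka dimensions forces the general fibers of the Iitaka fibration of $N$ to be $f$-contracted, so that $f$ factors through $g$. This in turn rests on the generic finiteness of $W \to Z$, whose only nonformal input is the standard fact that for $m \gg 0$ the $N$-Iitaka map restricts on a general fiber $F$ to a map of image dimension exactly $\kappa(N_F)$. I would isolate and cite that fact, since it is the one place where the argument is not purely dimension-counting.
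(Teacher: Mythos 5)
The paper does not prove this lemma at all --- it is quoted from Fujita and Mori --- so your attempt can only be judged on its own merits. Your second implication (equality $\Rightarrow$ embedding), via the Iitaka fibration of $N$ and the generic finiteness of $W \to Z$ forcing $f'$ to contract the Iitaka fibers, is essentially the standard argument and is sound modulo routine birational bookkeeping; your remark about the degenerate case $\kappa(N_F) = -\infty$ is also fair. The problem is in the first implication, which happens to be the direction the paper actually invokes in the proof of Theorem~\ref{Iitaka}.

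The gap is the assertion that there is a ``generic trivialization'' $\mathcal{O}_Y^{\oplus r_j} \hookrightarrow \mathcal{F}_j = f_* N^{\otimes j}$. A torsion-free sheaf of rank $r_j$ need not contain a trivial subsheaf of full rank --- indeed it need not have any global sections (already $\mathcal{O}(-1)^{\oplus 2}$ on $\mathbb{P}^1$ contains no copy of $\mathcal{O}$) --- and such an injection would amount to $r_j$ global sections of $\mathcal{F}_j$ that are generically independent, i.e.\ essentially to the restriction $H^0(X,N^{\otimes j}) \to H^0(F,N_F^{\otimes j})$ having full image, which is nothing like automatic. What one can do is find $\mathcal{O}_Y^{\oplus r_j} \hookrightarrow \mathcal{F}_j \otimes L^{\otimes r_0(j)}$ for some twist $r_0(j)$, but that twist depends on $j$, and since your asymptotic count sends $j$ and $r$ to infinity together, this dependence destroys the estimate; your parenthetical ``the constant $c$ being independent of $j$ because $L$ is big'' addresses the growth of $h^0(L^{\otimes r})$, not the actual issue. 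The standard repair (this is how Fujita/Mori argue) is to fix a single $j_0$ with $\dim \Phi_{|N_F^{\otimes j_0}|}(F) = \kappa(N_F)$, fix one twist $r_0$ with $\mathcal{O}_Y^{\oplus r_{j_0}} \hookrightarrow \mathcal{F}_{j_0} \otimes L^{\otimes r_0}$, and then feed symmetric powers $S^t \mathcal{F}_{j_0} \to f_* N^{\otimes j_0 t}$ through the multiplication maps: the image subsheaf has generic rank $\gtrsim t^{\kappa}$ because the degree-$j_0$ piece already computes the Iitaka dimension of $N_F$, and twisting by one further $L^{\otimes s}$ with $s \approx t$ contributes the factor $s^{\dim Y}$ before you absorb all the $f^*L$'s into powers of $N$ via $f^*L \hookrightarrow N^{\otimes m}$. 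With that modification the direction goes through; as written, the step fails.
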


To make use of this, note first that according to \cite{Viehweg1}*{Lemma 7.3}, there exists a smooth birational modification $\tau\colon Y^\prime \to Y$, and a resolution $X^\prime$ of $X \times_Y Y^\prime$, giving a commutative diagram 
\[
\begin{tikzcd}
X^\prime  \dar{f^\prime} \rar{\tau^\prime} 
		& X  \dar{f} \\
Y^\prime \rar{\tau} & Y		
\end{tikzcd}
\]
with the property that every effective divisor $B$ on $X^\prime$ that is exceptional for $f^\prime$ lies in the exceptional locus of $\tau^\prime$.
Note that in this case $\tau^{\prime}_* \omega_{X^\prime}^{\otimes k} (kB) \simeq \omega_X^{\otimes k}$ for every $k \ge 0$. Also, ${\tau^\prime}^* M$ is still nef and $f^\prime$-big.

Fix now an ample line bundle $L$ on $Y$, and consider the big line bundle $L^\prime = \tau^* L$ on $Y^\prime$. By Theorem \ref{WP_adjoint} we have that for any $k > 0$ (which we can assume to be such that $f^\prime_*(\omega_{X^\prime /Y^\prime }\otimes {\tau^\prime}^* M )^{\otimes k}\neq 0$) there exists $b >0$ such that 
$$S^{[2b]} f^\prime_* (\omega_{X^\prime /Y^\prime }\otimes {\tau^\prime}^* M )^{\otimes k} \otimes {L^\prime}^{\otimes b}$$
is generically globally generated. On the other hand, there exists an effective divisor $B$ on $X^\prime$, exceptional for $f^\prime$, such that the reflexive 
hull of  $f^\prime_* (\omega_{X^\prime /Y^\prime}\otimes {\tau^\prime}^* M )^{\otimes p}$ is equal to $f^\prime_*(\omega_{X^\prime /Y^\prime} (B)\otimes {\tau^\prime}^* M )^{\otimes p}$ for every 
$p \le k b$. Using the nontrivial map induced by multiplication of sections on the fibers, we obtain that 
$$f^\prime_* (\omega_{X^\prime /Y^\prime }(B) \otimes {\tau^\prime}^* M )^{\otimes 2kb}
 \otimes {L^\prime}^{\otimes b}$$
has a non-zero section, and hence we obtain an inclusion
$${f^\prime}^* {L^\prime}^{\otimes b} \hookrightarrow (\omega_{X^\prime /Y^\prime }(B) \otimes {\tau^\prime}^* M )^{\otimes 2kb} \otimes {f^\prime}^* {L^\prime}^{\otimes 2b}.$$
According to Lemma \ref{technical}, we obtain that 
$$\kappa \left( (\omega_{X^\prime /Y^\prime }(B) \otimes {\tau^\prime}^* M )^{\otimes k} 
 \otimes {f^\prime}^* L^\prime \right) = \kappa (\omega_{F^\prime} \otimes  {({\tau^\prime}^* M)}_{F^\prime}) +
 \dim Y^\prime = $$
$$ = \kappa (\omega_F \otimes M_F) + \dim Y,$$ 
 where $F^\prime$ is the general fiber of $f^\prime$. 
 
 To deduce (i) note that, as we have observed that 
 $\tau^{\prime}_* \omega_{X^\prime}^{\otimes k} (kB) \simeq \omega_X^{\otimes k}$, we have 
 $$\tau^\prime_* \left((\omega_{X^\prime /Y^\prime }(B) \otimes {\tau^\prime}^* M )^{\otimes k} 
 \otimes {f^\prime}^* L^\prime \right) \simeq (\omega_{X/Y} \otimes M)^{\otimes k} \otimes f^*L.$$
To deduce (ii), since $Y^\prime$ is of general type recall that by Kodaira's Lemma there exists 
an inclusion $L^\prime \hookrightarrow \omega_{Y^\prime}^{\otimes r}$ for some $r >0$. This implies that 
$$\kappa (\omega_X \otimes M) = \kappa \left( \omega_{X^\prime}(B) \otimes {\tau^\prime}^* M  \right) \ge 
\kappa \left( (\omega_{X^\prime /Y^\prime }(B) \otimes {\tau^\prime}^* M )^{\otimes r} 
 \otimes {f^\prime}^* L^\prime \right),$$
which is equal to $\kappa (\omega_F \otimes M_F) + \dim Y$ by the above.
\end{proof}

\subsection{Generic vanishing for direct images of pluricanonical bundles}\label{section_GV}

We concentrate now on the case of morphisms $f\colon X\to A$, where $X$ is a smooth
projective variety and $A$ is an abelian variety. Recall the following definition
\cite[Definition~3.1]{PP3}.

\begin{definition}\label{GV_definition}
A coherent sheaf $\shF$ on $X$ is called a \define{GV-sheaf} (with respect to the
given morphism $f$) if it satisfies
\[
	\codim \{ \, \alpha \in \Pic^0(A) \, \mid \, H^k(X, \shF \otimes f^* P_{\alpha}) \neq
0 \, \} \geq k
\]
for every $k \geq 0$.
\end{definition}

If $f$ is generically finite, then by a special case of the generic vanishing theorem
of Green and Lazarsfeld \cite{GL1}, $\omega_X$ is a GV-sheaf. This was
generalized by Hacon \cite{Hacon} to the
effect that for an arbitrary $f$ the higher direct images $R^i f_* \omega_X$ are
GV-sheaves on $A$ for all $i$. 
On the other hand, there exist simple examples showing that
even when $f$ is generically finite, the powers $\omega_X^{\otimes k}$ with $k \ge 2$
are not necessarily GV-sheaves; see \cite{PP3}*{Example 5.6}. 
Therefore Theorem \ref{GV} in the introduction is a quite surprising application of 
the methods in this paper. 
 
\begin{proof}[Proof of Theorem \ref{GV}]
Let $M$ be a very high power of an ample line bundle on $\widehat A$, and let 
$\varphi_M\colon \widehat A \to A$ be the isogeny induced by $M$. According to a criterion of 
Hacon \cite{Hacon}*{Corollary~3.1}, the assertion will be proved if we manage to show that 
$$H^i (\widehat A, \varphi_M^*  f_*\omega_X^{\otimes k} \otimes M) = 0 \quad
\text{for all $i >0$.}$$
Equivalently, we need to show that 
$$H^i (\widehat A, g_* \omega_{X_1}^{\otimes k} \otimes M) = 0  \quad
\text{for all $i >0$,}$$
where $g\colon X_1 \to \widehat A$ is the base change of $f\colon X \to A $ via $\varphi_M$.
We can however perform another base change $\mu \colon \widehat A \to \widehat A$ by a multiplication map of large degree, such that $\mu^* M \simeq L^{\otimes d}$, where $L$ is an ample line bundle, which we can also 
assume to be globally generated, and $d$ is arbitrarily large. The situation is summarized in the following 
diagram:
\[
\begin{tikzcd}
X_2  \dar{h} \rar
		& X_1  \dar{g}  \rar & X \dar{f} \\
\widehat A \rar{\mu} & \widehat A \rar{\varphi_M}	& A	
\end{tikzcd}
\]
It is then enough to show that 
$$H^i (\widehat A, h_*\omega_{X_2}^{\otimes k} \otimes L^{\otimes d}) = 0 \quad \text{for all $i
>0$.}$$
Note that we cannot apply Serre Vanishing here, as all of our constructions depend on the original choice of $M$. However, we can conclude if we know that there exists a bound $d = d (n, k)$, i.e. depending only on $n = \dim A$ 
and $k$, such that the vanishing in question holds for any morphism $h$.

At this stage we can of course apply Corollary \ref{vanishing_main}, which allows us to take $d \ge k(n+1) - n$. 
We stress however that as long as we know that such a uniform bound for $d$ exists, for this argument its precise shape does not matter. We therefore choose to present below a weaker but more elementary result that does not need vanishing theorems for $\QQ$-divisors, making the argument self-contained.

Indeed, Proposition \ref{summand} below shows that there exists a morphism  
$\varphi: Z \rightarrow \widehat A$ with $Z$ smooth projective, and $m \le n+k$, such that $h_* \omega_{X_2}^{\otimes k} \otimes L^{\otimes m(k-1)}$ is a direct summand in $\varphi_* \omega_Z$. Applying Koll\'ar vanishing Theorem \ref{kollar_vanishing}, we deduce that 
$$H^i (\widehat A, h_*\omega_{X_2}^{\otimes k} \otimes L^{\otimes d}) = 0 \,\,\,\, {\rm for~ all} \,\,\,\, i
>0 \,\,\,\,{\rm and ~all} \,\,\,\, d \ge (n+k)(k -1) +1,$$
which suffices to conclude the proof.
\end{proof}

\begin{proposition}\label{summand}
Let $f: X \rightarrow Y$ be a morphism of projective varieties, with $X$ smooth and $Y$ of dimension $n$.
Let $L$ be an ample and globally generated line bundle on $Y$, and $k\ge 1$ an integer. Then
there exists a smooth projective variety $Z$ with a morphism $\varphi: Z \rightarrow Y$, and an integer $0 \le m\le n+k$, such that $f_* \omega_X^{\otimes k} \otimes L^{\otimes m(k-1)}$ is a direct summand in $\varphi_* \omega_Z$.
\end{proposition}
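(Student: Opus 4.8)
The plan is to reduce the pluricanonical sheaf $\fl \omX^{\otimes k}$ to an ordinary canonical pushforward on an auxiliary cover by means of the cyclic covering construction, so that in the proof of Theorem \ref{GV} one may invoke the elementary Kollár vanishing of Theorem \ref{kollar_vanishing} without appealing to vanishing for $\QQ$-divisors. Concretely, I would set $N := \omX \otimes \fu L^{\otimes m}$ and look for a \emph{reduced} divisor $D$ in the linear system $\lvert N^{\otimes k}\rvert = \lvert \omX^{\otimes k} \otimes \fu L^{\otimes mk}\rvert$. Let $\pi\colon W \to X$ be the degree-$k$ cyclic cover branched along $D$, let $\rho\colon Z \to W$ be a resolution, and set $\psi := \pi \circ \rho\colon Z \to X$ with $Z$ smooth projective. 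The $\mu_k$-action splits $\psi_* \omega_Z$ into its character eigensheaves, yielding the Esnault--Viehweg decomposition $\psi_* \omega_Z \simeq \bigoplus_{i=0}^{k-1} \omX \otimes N^{\otimes i} \otimes \OX\bigl(-\lfloor iD/k\rfloor\bigr)$. Since $D$ is reduced every round-down $\lfloor iD/k\rfloor$ vanishes, so the top eigensheaf is exactly $\omX \otimes N^{\otimes (k-1)} \simeq \omX^{\otimes k}\otimes \fu L^{\otimes m(k-1)}$, a genuine direct summand. Composing with $f$ and applying the projection formula then exhibits $\fl \omX^{\otimes k} \otimes L^{\otimes m(k-1)}$ as a direct summand of $\varphi_* \omega_Z$ with $\varphi = f \circ \psi$, which is the assertion.

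The two points requiring care are the production of the branch divisor $D$ and the effective control of $m$. For $D$ to exist I need $N^{\otimes k}$ to be effective, and here I would invoke only the classical ($k=1$) positivity: Proposition \ref{very_ample} and Kollár vanishing are Kodaira-type statements involving no fractional coefficients, and they supply enough sections of the relevant twists to guarantee a nonzero member of $\lvert \omX^{\otimes k}\otimes \fu L^{\otimes mk}\rvert$ once $m$ is large enough, with the minimal admissible $m$ to be tracked and bounded by $n+k$. The decisive subtlety, which I expect to be the main obstacle, is that $D$ must be \emph{reduced} rather than a $k$-fold multiple: the $k$-th power of a single section of $N$ would make the cover split off a trivial factor and collapse the top summand from $\omX^{\otimes k}\otimes\fu L^{\otimes m(k-1)}$ down to $\omX$, so one must ensure that $\lvert N^{\otimes k}\rvert$ contains members not of the form $kD'$. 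I would handle this exactly as in Steps 1--2 of the proof of Theorem \ref{vanishing_main_lc}: pass to the image $N\otimes \OX(-E)$ of the relative adjunction morphism $\fu\fl N^{\otimes k}\to N^{\otimes k}$ to strip away the fixed (relative base) part, after which a general member of the moving system is reduced and can be made simple normal crossings by a further blow-up.

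Finally I would discharge the resolution bookkeeping. Blowing up $X$ to make $D$ simple normal crossings replaces $\omX$ by $\omega_{X'}$ and introduces exceptional divisors; since these are exceptional one has $\mu_* \omega_{X'}^{\otimes k}(k\lceil P\rceil) \simeq \omX^{\otimes k}$ in the manner already used in Step 1 of Theorem \ref{vanishing_main_lc}, so the summand descends correctly to $X$. Over a simple normal crossings branch divisor the cyclic cover $W$ has only quotient, hence rational, singularities, so for the resolution $\rho\colon Z \to W$ one has $\rho_* \omega_Z \simeq \omega_W$ and $\psi_* \omega_Z$ carries the eigensheaf decomposition above; this is what lets me take $Z$ smooth at no cost. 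The upshot is a clean cyclic-cover argument whose only positivity input is the $k=1$, integral-coefficient vanishing, which is precisely what makes the subsequent application of Theorem \ref{kollar_vanishing} in the proof of Theorem \ref{GV} self-contained.
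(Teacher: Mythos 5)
Your overall strategy---cyclic cover, Esnault--Viehweg eigensheaf decomposition, then bound $m$ by minimality---is the same as the paper's, but there is a genuine gap at the central step. You want the branch divisor of the degree-$k$ cover associated to $N = \omX \tensor \fu L^{\otimes m}$ to be \emph{reduced}, so that all round-downs $\lfloor iD/k \rfloor$ vanish and the top eigensheaf is exactly $\omX \tensor N^{\otimes k-1}$. But the cover is determined by a section of $N^{\otimes k}$, and by the projection formula every global section of $N^{\otimes k} = \omX^{\otimes k}\tensor \fu L^{\otimes mk}$ comes from $H^0(Y, \fl\omX^{\otimes k}\tensor L^{\otimes mk})$ and hence factors through the image $\omX^{\otimes k}\tensor\OX(-E)\tensor\fu L^{\otimes mk}$ of the adjunction map. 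So every member of $\lvert N^{\otimes k}\rvert$ contains the relative base divisor $E$ with its (possibly $\geq 2$) multiplicities; you cannot ``strip away the fixed part'' and still have a divisor in $\lvert N^{\otimes k}\rvert$, because $N^{\otimes k}\tensor\OX(-E)$ is no longer a $k$-th power of a line bundle. The branch divisor is unavoidably $D+E$ with $D$ a general (smooth, irreducible) member of the moving part, and the Esnault--Viehweg summand is $\omX^{\otimes k}\tensor\fu L^{\otimes m(k-1)}\tensor\OX\bigl(-\lfloor\tfrac{k-1}{k}E\rfloor\bigr)$, with a round-down term that does \emph{not} vanish. The missing idea is that this term is harmless only after pushing forward: since $E$ is the relative base divisor, one has $\fl\bigl(\omX^{\otimes k}\tensor\OX(-E')\bigr)\simeq\fl\omX^{\otimes k}$ for every effective $E'\preceq E$ (the paper's observation \eqref{subtraction}), and this isomorphism is what produces $\fl\omX^{\otimes k}\tensor L^{\otimes m(k-1)}$ itself as a summand of $\varphi_*\omega_Z$.

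A second, smaller gap: you assert $m\le n+k$ is obtained by ``tracking'' $m$, but the bound is not a consequence of producing sections---ampleness of $L$ only gives \emph{some} finite $m$. The actual argument is a bootstrap: once the direct-summand statement is established for the minimal $m$ with $\fl\omX^{\otimes k}\tensor L^{\otimes mk}$ globally generated, Proposition \ref{very_ample} applied to $\varphi_*\omega_Z$ shows $\fl\omX^{\otimes k}\tensor L^{\otimes m(k-1)+n+1}$ is globally generated, and minimality forces $m(k-1)+n+1\ge (m-1)k+1$, i.e.\ $m\le n+k$. Without this comparison the effective bound does not follow. Your remarks on resolving the cover and on why $D$ must not be a $k$-fold multiple are correct, but the two points above need to be filled in for the proof to close.
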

\begin{proof}
This is closer in spirit to the arguments towards weak positivity used by Viehweg in \cite{Viehweg1}*{\S5}.
Note first that $f_*\omega_X^{\otimes k} \otimes L^{\otimes pk}$ is globally generated for some sufficiently 
large $p$. Denote by $m$ the minimal $p\ge 0$ for which this is satisfied. 

We are going to use a branched covering construction to
show that $m \leq n +k$. First, consider the adjunction morphism
\[
	\fu \fl \omX^{\otimes k} \to \omX^{\otimes k}.
\]
After blowing up on $X$, if necessary, we can assume that the image sheaf is of the
form $\omX^{\otimes k} \tensor \OX(-E)$ for a divisor $E$ with normal crossing support. As
$\fl \omX^{\otimes k} \tensor L^{\otimes mk}$ is globally generated, we have that the line bundle
\[
	\omX^{\otimes k} \tensor \fu L^{\otimes mk} \tensor \OX(-E)
\]
is globally generated as well. It is therefore isomorphic to $\OX(D)$, where $D$ is an
irreducible smooth divisor, not contained in the support of $E$, such that $D + E$
still has normal crossings. We have arranged that
\[
	(\omX \tensor \fu L^{\otimes m})^{\otimes k} \simeq \OX(D+E),
\] 
and so we can take the associated covering of $X$ branched along $D + E$ and resolve its singularities. This gives us a generically finite morphism $g \colon Z \to X$ of degree $k$, and we denote $\varphi = f\circ g: Z \rightarrow Y$.

Now by a well-known calculation of Esnault and Viehweg  \cite{Viehweg1}*{Lemma 2.3}, the direct
image $\gl \omega_Z$ contains  the sheaf
\begin{align*}
	\omX \tensor (\omX \tensor \fu L^{\otimes m})^{\otimes k-1} \tensor 
		&\OX \left(- \left\lfloor \frac{k-1}{k} \bigl( D+E \bigr) \right\rfloor \right) \\
	\simeq \omX^{\otimes k} \tensor \fu L^{\otimes m(k-1)} 
		\tensor &\OX \left(- \left\lfloor \frac{k-1}{k} E \right\rfloor \right)
\end{align*}
 as a direct summand. If we now apply $\fl$, we find that
\begin{equation} \label{eq:new-sheaf}
	\fl \left( \omX^{\otimes k} \tensor \OX \left(- \left\lfloor \frac{k-1}{k} E \right\rfloor
		\right) \right) \tensor L^{\otimes m(k-1)}
\end{equation}
is a direct summand of $\varphi_{\ast} \omega_Z$. 
At this point we observe as in the proof of Theorem \ref{vanishing_main_lc} that, 
since $E$ is the relative base locus of 
$\omega_X^{\otimes k}$, we have 
\[
	\fl \left( \omX^{\otimes k} \tensor \OX \left(- \left\lfloor \frac{k-1}{k} E \right\rfloor
		\right) \right) \simeq \fl \omX^{\otimes k}.
\]
In other words, $f_* \omega_X^{\otimes k} \otimes L^{\otimes m(k-1)}$ is a direct summand in 
$\varphi_* \omega_Z$. Applying Proposition \ref{very_ample}, we deduce in turn that 
$f_* \omega_X^{\otimes k} \otimes L^{\otimes m(k-1) + n+1}$ is globally generated.
By our minimal choice of $m$, this is only possible if
\[
	m(k-1)+ n + 1 \geq (m-1) k+1,
\]
which is equivalent to $m \leq n + k$. 
\end{proof}

\begin{remark}
With slightly more clever choices, the integer $m$ in Proposition \ref{summand} 
can be chosen to satisfy $m \le n+2$, but the effective vanishing consequence is still weaker than that
obtained in Corollary \ref{vanishing_main}. Note also that one can show analogous results in the case of log-canonical pairs and of adjoint bundles, with only small additional technicalities.  
\end{remark}

Going back to the case when the base is an abelian variety, once we know generic vanishing 
the situation is in fact much better than what we obtained for morphisms to 
arbitrary varieties.

\begin{corollary}\label{gv_consequences}
If $f:X \rightarrow A$ is a morphism from a smooth projective variety to an abelian variety, for every ample line bundle $L$ on $A$ and every $k \ge 1$ one has: 

\noindent
(i) $f_*\omega_X^{\otimes k}$ is a nef sheaf on $A$.

\noindent
(ii) $H^i (A, f_*\omega_X^{\otimes k} \otimes L) = 0$ for all $i >0$.

\noindent
(iii) $f_*\omega_X^{\otimes k} \otimes L^{\otimes 2}$ is globally generated.
\end{corollary}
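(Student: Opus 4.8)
The plan is to discard the geometric origin of the sheaf and work only with the fact, furnished by \theoremref{GV}, that $\shF := f_*\omega_X^{\otimes k}$ is a GV-sheaf on the abelian variety $A$ in the sense of \definitionref{GV_definition}. All three assertions are then standard positivity consequences of the generic vanishing property, and I would deduce them from the Fourier--Mukai characterization of GV-sheaves of Pareschi and Popa \cite{PP3} in the order (ii) $\Rightarrow$ (iii) $\Rightarrow$ (i). For (ii), the issue is to upgrade the information encoded in the GV inequalities---which only controls cohomology after twisting by the \emph{degree-zero} characters $P_\alpha$---to honest vanishing after twisting by a genuinely positive line bundle. The mechanism is that $\shF$ being GV is equivalent to the transform $R\widehat S\bigl(R\shHom(\shF,\shO_A)\bigr)$ being concentrated in the single cohomological degree $g=\dim A$; feeding this into the base-change computation of $H^\bullet(A,\shF\otimes L)$ shows that for every ample $L$ the twist $\shF\otimes L$ satisfies the Index Theorem with index $0$, so that $H^i(A,\shF\otimes L\otimes P_\alpha)=0$ for all $i>0$ and all $\alpha\in\Pic^0(A)$. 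Taking $\alpha=0$ is precisely (ii), and the vanishing of all higher cohomological support loci shows moreover that $\shF\otimes L$ is $M$-regular.

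For (iii) I would run the continuous-global-generation machinery. The $IT_0$---hence $M$-regular---sheaf $\shF\otimes L$ is continuously globally generated, while the ample line bundle $L$ is itself continuously globally generated by the theorem of the square (the translates of $\lvert L\rvert$ cover $A$). Pareschi and Popa's criterion, that the tensor product of a continuously globally generated sheaf with a continuously globally generated line bundle is globally generated, then gives that $(\shF\otimes L)\otimes L=\shF\otimes L^{\otimes 2}$ is globally generated, which is exactly (iii).

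For (i) I would bootstrap nefness from (iii) by an isogeny limiting argument. Fix a symmetric ample line bundle $L_0$ on $A$ and let $n_A\colon A\to A$ be multiplication by $n$, so that $n_A^*L_0\cong L_0^{\otimes n^2}$ and $n_A^*\shF$ is again a GV-sheaf, GV being preserved under pullback along isogenies. Applying (iii) to $n_A^*\shF$ shows $n_A^*\shF\otimes L_0^{\otimes 2}$ is globally generated, hence nef; pulling this back along the finite surjective map $\PP(n_A^*\shF)\to\PP(\shF)$ covering $n_A$ translates into the statement that the numerical class $\shO_{\PP(\shF)}(1)+\tfrac{2}{n^2}\pi^*c_1(L_0)$ is nef for every $n$, where $\pi\colon\PP(\shF)\to A$ is the projection. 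Letting $n\to\infty$ exhibits $\shO_{\PP(\shF)}(1)$ as a limit of nef classes, hence nef, which is (i).

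The main obstacle is the first step: passing from the ``generic'' vanishing built into \definitionref{GV_definition} to uniform $IT_0$ vanishing after twisting by an \emph{arbitrary} ample $L$---not merely a sufficiently positive one, for which Serre vanishing or the Hacon-type criterion used in the proof of \theoremref{GV} would already suffice. This is exactly the content of the Fourier--Mukai-transform description of GV-sheaves, and it is the only place where the full strength of the theory is needed; once that $IT_0$ statement is in hand, steps (iii) and (i) are formal, relying respectively on continuous global generation and on the fact that nefness may be tested after pulling back along isogenies.
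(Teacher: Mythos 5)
Your proposal is correct and follows essentially the same route as the paper: the paper likewise reduces everything to the fact that $f_*\omega_X^{\otimes k}$ is a GV-sheaf (Theorem \ref{GV}) and then quotes the Pareschi--Popa machinery directly, citing \cite{PP2}*{Theorem 4.1} for (i), \cite{PP2}*{Proposition 3.1} (tensoring a GV-sheaf with an $IT_0$ locally free sheaf yields an $IT_0$ sheaf) for (ii), and \cite{PP1}*{Theorem 2.4} via $M$-regularity for (iii). The only difference is one of packaging: where the paper cites these results as black boxes, you unpack their proofs --- the Fourier--Mukai argument for (ii), continuous global generation for (iii), and the isogeny/limit argument deducing (i) from (iii), which is in substance the proof of the cited nefness theorem for GV-sheaves --- so no new mathematical content is added or missing.
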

\begin{proof}
For (i), note that every GV-sheaf is nef by \cite{PP2}*{Theorem 4.1}. Part (ii) follows from the more general 
fact that the tensor product of a GV-sheaf with an IT$_0$ locally free sheaf is IT$_0$; see \cite{PP2}*{Proposition 3.1}. Finally (iii) follows from \cite{PP1}*{Theorem 2.4}, as by (ii)  
$f_*\omega_X^{\otimes k} \otimes L$ is an $M$-regular sheaf on $A$. 
\end{proof}

\begin{question}
It is again natural to ask whether, given a morphism $f\colon X \to A$, the higher direct images $R^i f_* \omega_X^{\otimes k}$ are GV-sheaves for all $i$.
\end{question}

The exact same method, with appropriate technical modifications, gives the following analogues for log-canonical pairs and pluriadjoint bundles, either based on Corollary \ref{lc_special_main} and Theorem \ref{vanishing_main_adjoint}, or on the analogues of Proposition \ref{summand}; we will not repeat the argument.

\begin{variant}\label{GV_lc}
Let $f: X \rightarrow A$ be a morphism from a normal projective variety to an abelian variety. 
If $(X, \Delta)$ is a log-canonical pair and $k \ge 1$ is any integer such that $k(K_X + \Delta)$ is Cartier, 
then $f_* \shO_X \left(k (K_X + \Delta) \right)$ is
a $GV$-sheaf for every $k\ge 1$.
\end{variant}

\begin{variant}\label{GV_adjoint}
Let $f: X\rightarrow A$ be a fibration between a smooth projective variety and an abelian variety, 
and $M$ is a nef and $f$-big line bundle on $X$. Then
$f_*(\omega_X\otimes M)^{\otimes k}$ is a GV-sheaf for every $k \ge 1$. 
\end{variant}

\noindent
{\bf Acknowledgement.} 
CS is very grateful to Daniel Huybrechts for the opportunity to spend the academic
year 2013/14 at the University of Bonn. We also thank the referee for several very
useful corrections.

\section*{References}
\begin{biblist}

\bib{Ambro}{article}{
     author={Ambro, Florin},
	title={Quasi-log varieties},
	journal={Proc. Steklov Inst. Math.}, 
	number={240},
	date={2003}, 
	pages={214-233},
}
\bib{Campana}{article}{
     author={Campana, Fr\'ed\'eric},
	title={Orbifolds, special varieties and classification theory},
	journal={Ann. Inst. Fourier}, 
	number={54},
	date={2004}, 
	pages={499--630},
}
\bib{Fujino2}{article}{
      author={Fujino, Osamu},
	title={Effective base point free theorem for log canonical pairs, II. Angehrn--Siu type theorems.},
	journal={Michigan Math. J.},
	number={59}, 
	date={2010}, 
	pages={303--312},
}
\bib{Fujino3}{article}{
      author={Fujino, Osamu},
	title={Fundamental theorems for the log minimal model program},
	journal={Publ. Res. Inst. Math. Sci.},
	number={47}, 
	date={2011}, 
	pages={727--789},
}
\bib{Fujino}{article}{
      author={Fujino, Osamu},
	title={Notes on the weak positivity theorems},
	journal={preprint arXiv:1406.1834}, 
	date={2014}, 
}
\bib{Fujino4}{article}{
      author={Fujino, Osamu},
	title={Direct images of pluricanonical divisors},
	journal={preprint arXiv:1409.7437}, 
	date={2014}, 
}
\bib{Fujita}{article}{
      author={Fujita, Takao},
	title={Some remarks on Kodaira dimensions of fiber spaces},
	journal={Proc. Japan Acad., Ser. A}, 
	number={53},
	date={1977}, 
	pages={28--30},
}
\bib{GL1}{article}{
   author={Green, Mark},
   author={Lazarsfeld, Robert},
   title={Deformation theory, generic vanishing theorems, and some
   conjectures of Enriques, Catanese and Beauville},
   journal={Invent. Math.},
   volume={90},
   date={1987},
   number={2},
   pages={389--407},
}
\bib{Hacon}{article}{
	author={Hacon, Christopher},
	title={A derived category approach to generic vanishing},
	journal={J. Reine Angew. Math.},
	volume={575},
	date={2004},	
	pages={173--187},
}
\bib{Hoering}{article}{
 	author={H\"oring, Andreas},
	title={Positivity of direct image sheaves -- a geometric point of view},
	journal={L'Enseignement Math\'ematique},
	volume={56},
	date={2010},	
	pages={87--142},
}
\bib{Kawamata1}{article}{
      author={Kawamata, Yujiro},
	title={Kodaira dimension of algebraic fiber spaces over curves},
	journal={Invent. Math.}, 
	volume={66}
	date={1982}, 
	pages={57--71},
}
\bib{Kawamata2}{article}{
      author={Kawamata, Yujiro},
	title={On a relative version of Fujita's freeness conjecture},
	journal={Complex Geometry, I. Bauer et al. (eds.), Springer-Verlag}, 
	date={2002}, 
	pages={135--146},
}
\bib{Kollar}{article}{
   author={Koll\'ar, J\'anos},
   title={Higher direct images of dualizing sheaves I},
   journal={Ann. of Math.},
   number={123},
   date={1986},
   pages={11--42},
}
\bib{Kollar2}{article}{
   author={Koll\'ar, J\'anos},
   title={Singularities of pairs},
   journal={Proc. Sympos. Pure Math., Algebraic geometry -- Santa Cruz 1995},
   number={62},
   date={1997},
   pages={221--287},
}
\bib{shafarevich_maps}{book}{
         author={Koll\'ar, J\'amos},
         title={Shafarevich maps and automorphic forms},
         publisher={Princeton University Press}, 
         date={1995},
}
\bib{Lazarsfeld}{book}{
        author={Lazarsfeld, Robert},
        title={Positivity in algebraic geometry I $\&$ II},
        series={Ergebnisse der Mathematik und ihrer Grenzgebiete}, 
        number={48 $\&$ 49}, 
        publisher={Springer-Verlag, Berlin}, 
        date={2004},
}
\bib{Mori}{article}{
      author={Mori, Shigefumi},
	title={Classification of higher-dimensional varieties},
	journal={Proceedings of Symposia in Pure Mathematics, Algebraic Geometry -- Bowdoin 1985}, 
	number={46},
	date={1987}, 
	pages={269--332},
}
\bib{PP1}{article}{
	author={Pareschi, Giuseppe},
	author={Popa, Mihnea},
	title={Regularity on abelian varieties I},
	journal={J. Amer. Math. Soc.},
	volume={16},
	date={2003},
	pages={285--302},
}
\bib{PP2}{article}{
	author={Pareschi, Giuseppe},
	author={Popa, Mihnea},
	title={Regularity on abelian varieties III: relationship with Generic Vanishing and applications},
	journal={Clay Math. Proceedings},
	volume={14},
	date={2011},
	pages={141--167},
}
\bib{PP3}{article}{
	author={Pareschi, Giuseppe},
	author={Popa, Mihnea},
	title={GV-sheaves, Fourier-Mukai transform, and Generic Vanishing},
	journal={Amer. J. Math.},
	volume={133},
	date={2011},
	number={1},
	pages={235--271},
}
\bib{Viehweg1}{article}{
      author={Viehweg, Eckart},
	title={Weak positivity and the additivity of the Kodaira dimension of certain fiber spaces},
	journal={Adv. Studies Pure Math.}, 
	number={1},
	date={1983}, 
	pages={329--353},
}
\end{biblist}

\end{document}